\newif\ifcolorcomments
\newcommand{\allowcomments}[4]{
\newcommand{#1}[1]{\ifdraft{\ifcolorcomments{\textcolor{#4}{##1 --#3}}\else{\textsl{ ##1 \ --#3}}\fi}\else{}\fi}
}
\allowcomments{\commumtaz}{MH}{Mumtaz}{green}
\allowcomments{\compb}{PB}{Phil}{blue}
\allowcomments{\comab}{AB}{A}{magenta}
\def\bc{\begin{center}}
\def\ec{\end{center}}
\def\be{\begin{equation}}
\def\ee{\end{equation}}
\def\N{\mathbb N}
\def\Z{\mathbb Z}
\def\R{\mathbb R}
\def\Z{\mathbb Z}
\newtheorem{lem}{Lemma}[section]
\newtheorem{pro}[lem]{Proposition}
\newtheorem{thm}[lem]{Theorem}
\newtheorem{cor}[lem]{Corollary}
\numberwithin{equation}{section}
\newtheorem*{legendre}{Legendre Theorem}
\newif\ifdraft\drafttrue
\begin{document}
\title[  Hausdorff dimension and generalized Jarn\'{i}k-Besicovitch set     ]{Hausdorff dimension for the set of points connected with the generalized Jarn\'{i}k-Besicovitch set}
\author[A. Bakhtawar]{Ayreena ~Bakhtawar}
\address{Department of Mathematics and Statistics, La Trobe University, PO
Box 199, Bendigo 3552, Australia. }
\email{A Bakhtawar: a.bakhtawar@latrobe.edu.au}
\thanks{The research of A. Bakhtawar is supported by La Trobe University postgraduate research award.}
\keywords{   Hausdorff dimension, Jarn\'{i}k--Besicovitch theorem, continued fractions         \\
\noindent\textit{ {\em 2010} Mathematics Subject Classification}: primary 11K50; secondary 11J70, 11J83, 28A78}

\begin{abstract}
In this article we aim to investigate the Hausdorff dimension of the set of points $x \in [0,1)$ such that for any $r\in\mathbb{N},$
\begin{align*}  
 a_{n+1}(x)a_{n+2}(x)\cdots a_{n+r}(x)\geq e^{\tau(x)(h(x)+\cdots+h(T^{n-1}(x)))} 
\end{align*}
holds for infinitely many $n\in\mathbb{N},$ where 
 $h$ and  $\tau$ are positive continuous functions, $T$ is the Gauss map and $a_n(x)$ denote the $n$th partial quotient of $x$ in its continued fraction expansion.
By appropriate choices of $r,$ $\tau(x)$ snd $h(x)$ we obtain the classical Jarn\'{i}k-Besicovitch Theorem as well as more recent results by Wang--Wu--Xu, Wang--Wu, Huang--Wu--Xu and Hussain--Kleinbock--Wadleigh--Wang.
\end{abstract}

\maketitle

\section{Introduction}

From Dirichlet's theorem (1842) it is well known that for any irrational $x\in[0,1)$
 there exist infinitely
many pairs $(p,q)\in \Z \times \N$ such that 
\begin{equation}  \label{side1}
\left\vert x-\frac{p}{q}\right\vert<\frac{1}{q^{2}}.
\end{equation}
Statement \eqref{side1} is important as it provides the rate of approximation that works for all $x\in[0,1).$ Thus the area of interest is to investigate the sets of irrational numbers satisfying \eqref{side1} but with functions decreasing faster than one over the denominator squared. The primary metric result in this connection is due to Khintchine \cite{Khi_63} who investigated that for a decreasing function $\phi,$ the inequality
 \begin{equation}\label{side2} \left\vert x-\frac{p}{q}\right\vert<\phi(q)
 \end{equation}
  has infinitely many solutions for almost all (respectively almost no) $x\in [0,1)$ if and only if $\sum_{q=1}^\infty q\phi(q)$ diverges (respectively converges). He used the tool of continued fractions to show this.

 The metrical aspect of the theory of continued fractions has been very well studied due to its close connections with Diophantine approximation. This theory can be viewed as arising from the Gauss map $T:[0,1)\rightarrow [0,1)$ which is defined as
\begin{equation*} 
T(0)=0 \ {\rm} \ {\rm and} \ T(x)=\frac{1}{x}- \Big \lfloor \frac{1}{x} \Big \rfloor  \text{ for } \quad \text{ 0 } <x<1,
\end{equation*} 
where $\lfloor \cdot \rfloor$ denotes the integral part of any real number. Thus $T(x)$ represents the fractional part of $\frac{1}{x}.$

Also note that for any $x\in[0,1),$ its unique continued fraction expansion is given as:
\begin{equation}\label{cfrac}
x=\frac{1}{a_{1}(x)+\displaystyle{\frac{1}{a_{2}(x)+\displaystyle{\frac{1}{
a_{3}(x)+_{\ddots }}}}}}
\end{equation}
where 
for each $n\geq 1$, $a_{n}(x)$ are known as the partial quotients of $x$ such that $a_{1}(x)=\lfloor \frac{1}{x}\rfloor $ and $a_{n}(x)=\lfloor \frac{1}{T^{n-1}(x)}\rfloor$ for $n\geq 2$.  Therefore \eqref{cfrac} can also be represented as
\begin{equation*}
x=[a_{1}(x),a_{2}(x),a_{3}(x),\ldots, a_{n}(x)+T^{n}(x) ]= :[a_{1}(x),a_{2}(x),a_{3}(x),\ldots ].
\end{equation*}

Note that Khintchine's result is a Lebesgue measure criterion for the set of points satisfying equation \eqref{side2} and thus it gives no further information about the sizes of null sets. For this reason the Hausdorff dimension and measure are appropriate tools as they help to distinguish between null sets. The starting point to answer this problem is Jarn\'{i}k--Besicovitch theorem \cite{Bes34,Jar31} which gives the Hausdorff dimension of the set
\begin{equation}\label{Jset1}
J(\tau)=\left\{ x\in [ 0,1):  \left|x-\frac pq\right|  <\frac{1}{q^{\tau}}    \ \ 
\text{ for i.m. }(p,q)\in \mathbb{Z} \times \mathbb{N}\right\}.
\end{equation}
 Here and throughout `i.m.' stands for `infinitely many'.  

Recall that for any irrational $x\in [0,1),$ the irrationality exponent of $x$ is defined as
\begin{equation*}
\vartheta(x):=\sup \{\tau:x\in J(\tau)\}.
\end{equation*}
From equation \eqref{side1}, it is known that $\vartheta(x)\geq 2$ for any irrational $x\in[0,1).$
Moreover for any $\tau\geq 2,$ Jarn\'{i}k--Besicovitch theorem \cite{Bes34, Jar31} states that 
$$\dim_{\mathrm{H}}\{x\in[0,1): \vartheta(x)\geq\tau\}= \dim_{\mathrm{H}}\{x\in[0,1): \vartheta(x)=\tau\}=\frac{2}{\tau}.$$

Observe that the exponent $\tau$ in the above sets is constant. Barral and Seuret \cite{BaSe_11} generalized Jarn\'{i}k--Besicovitch theorem by considering the set of points for which the irrationality exponent is not fixed in advance but it may vary with $x$ in a continuous way. More precisely, Barral and Seuret \cite{BaSe_11} showed that for a continuous function $\tau(x),$ 

\begin{align*}
\dim_{\mathrm{H}}\{x\in[0,1): \vartheta(x)\geq \tau(x)\}&=\dim_{\mathrm{H}}\{x\in[0,1): \vartheta(x)=\tau(x)\}\\&=\frac{2}{\min \{  \tau(x) \ : \ x \in [0,1]\}}. 
\end{align*}

They called such a set the localised Jarn\'ik--Besicovitch set.
The result of Barral and Seuret was further generalised to the settings of continued fractions by Wang\textit{ et al. }in  \cite{WaWuXu16}. To refer their result, we first restate the Jarn\'{i}k--Besicovitch set \eqref{Jset1} in terms of growth rate of partial quotients,
\begin{multline}\label{Jset}
J(\tau)=\left\{ x\in [ 0,1):a_{n}(x)\geq e^{((\tau-2)/2) S_{n} (\log|T'(x)|)  } \text{ for i.m. } n\in \mathbb{N}\right\}.
\end{multline}
Note that in terms of entries of continued fractions the Jarn\'{i}k--Besicovitch set, as given in ~equation~\eqref{Jset}, contains the approximating function that involves the ergodic sum $$  S_{n} (\log|T'(x)|) =\log|T'(x)|+\cdots+ \log|T'(T^{n-1}(x))|$$ and this sum is growing fast  as $n\to\infty.$ Therefore having the approximating function in terms of the ergodic sum and the fact that partial quotients of any real number $x\in [0,1)$ completely determines its Diophantine properties, the Jarn\'{i}k--Besicovitch set (and its related variations which we will see in this article) in terms of the growth rate of partial quotients gives us better approximation results. 
In fact, Wang \textit{et al.} \cite{WaWuXu16} introduced the generalized version of the set~\eqref{Jset} as, 
 \begin{equation*}
J(\tau,h)= \left\{ x\in [ 0,1):a_{n}(x)\geq e^{\tau(x) S_{n}h(x)  }
\text{ for i.m. }n\in \mathbb{N}\right\},
\end{equation*} 
where  $h(x),$ $\tau(x)$ are positive continuous functions defined on $[0,1]$ and $S_{n}h(x)$ represents the ergodic sum `$h(x)+\cdots+h(T^{n-1}(x)).$' They called such points the localised $(\tau,h)$ approximable points. Further, they proved the Hausdorff dimension of $J(\tau,h)$ to be
\begin{equation*}s^{(1)}_\mathbb{N}=\inf \{s\geq 0 :\mathsf{P}(T, -s\tau_{\min} {h}-s\log |T^{\prime
}|)\le 0\},\end{equation*}
where $\tau_{\min}=\min \{\tau(x):x\in[0,1] \}$ and $\mathsf{P}$ denotes the pressure function defined below in Section \ref{Pres}. 

In this article we introduce the set of points $x\in[0, 1)$ for which the product of an arbitrary block of consecutive partial quotients, in their continued fraction expansion, are growing. In fact we determine the size of such a set in terms of Hausdorff dimension. 
Motivation for considering the growth of product of consecutive partial quotients arose from the work of Kleinbock and Wadleigh \cite{KlWa_19} where they considered improvements to Dirichlet's theorem. We refer the reader to \cite{BaBoHu_19, BaBoHu_20, HKWaW17, KiKi_20, KlWa_19, KlWa_2019} for comprehensive metric theory associated with the set of points improving Dirichlet's theorem.

We prove the following main result of this article.
Note that the tempered distortion property is defined in Section \ref{Pres}.

\begin{thm}
\label{ddd} Let $h: [0,1] \to (0, \infty)$ and  $\tau:[0,1]\to [0, \infty)$ be positive continuous functions with $h$ satisfying the tempered distortion property.
For $r\in \N$ define the set
\begin{align*} 
\mathcal{R}_{r}(\tau):=\left\{x\in[0, 1): a_{n+1}(x)\cdots a_{n+r}(x)\geq e^{\tau(x) S_{n}h(x)}  
\text{ for i.m. }n\in\mathbb{N}\right\}.
\end{align*}
Then
\begin{equation*} \dim_{\mathrm{H}} \mathcal R_{r}(\tau)=s^{(r)}_\N=\inf \{s\geq 0 :\mathsf{P}(T, -g_{r}(s)\tau_{\min}h-s\log |T^{\prime
}|)\le 0\},\end{equation*}
where 
$\tau_{\min}=\min\{\tau(x):x\in [0,1]\}$ and $g_r(s)$ is given by the following recursive formula $$ g_{1}(s)=s, \qquad g_{r}(s)=\frac {sg_{r-1}(s)}{1-s+g_{r-1}(s)}  \text{ for } r\geq2.$$
\end{thm}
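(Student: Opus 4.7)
The plan is to establish matching upper and lower bounds for $\dim_{\mathrm{H}}\mathcal{R}_r(\tau)$, extending the Wang--Wu--Xu argument for $r=1$. Throughout I use the standard continued fraction cylinder estimate $|I_n(a_1,\ldots,a_n)|\asymp\exp(-S_n\log|T'|(x))$ uniformly in $x\in I_n$; the tempered distortion of $h$, which makes $S_n h$ vary by at most a bounded constant across each rank-$n$ cylinder; the Hausdorff--Cantelli lemma applied to $\mathcal{R}_r(\tau)=\limsup_n E_n$, where $E_n=\{x: a_{n+1}(x)\cdots a_{n+r}(x)\geq e^{\tau(x)S_n h(x)}\}$; the mass distribution principle; and the Gibbs identity
\begin{equation*}
\sum_{I_n}\exp\bigl(-s\, S_n\log|T'|(x)+\sigma\, S_n h(x)\bigr)\asymp\exp\bigl(n\,\mathsf{P}(T,\sigma h-s\log|T'|)\bigr),
\end{equation*}
which converts cylinder sums into pressure quantities. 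Continuity of $\tau$ reduces the problem to the constant case $\tau\equiv\tau_{\min}$: the upper bound follows from the trivial inclusion $\mathcal{R}_r(\tau)\subseteq\mathcal{R}_r(\tau_{\min})$, and the matching lower bound from a Cantor construction localised near a minimiser of $\tau$.

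For the upper bound $\dim_{\mathrm{H}}\mathcal{R}_r(\tau_{\min})\leq s^{(r)}_{\mathbb{N}}$, fix $s>s^{(r)}_{\mathbb{N}}$ and decompose $E_n\cap I_n$ over $r$-tuples $(A_1,\ldots,A_r)$ of positive integers with $A_1\cdots A_r\geq B_n:=e^{\tau_{\min}S_n h(x)}$. I proceed by induction on $r$, with base case $r=1$ supplied by Wang--Wu--Xu. For the inductive step I condition on $a_{n+1}=A_1$: inside the sub-cylinder $I_{n+1}$ of length $\asymp|I_n|/A_1^2$ the residual constraint is $a_{n+2}\cdots a_{n+r}\geq B_n/A_1$, which by the inductive hypothesis admits a cover with $s$-sum bounded by $(|I_n|/A_1^2)^s(B_n/A_1)^{-g_{r-1}(s)}$ when $B_n/A_1\geq 1$, while for $A_1\geq B_n$ the constraint is trivial and the full cylinder $I_{n+1}$ is used. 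Summing over $A_1$, balancing the two regimes, and optimising the split produces the leading $s$-sum $|I_n|^s B_n^{-g_r(s)}$ with $g_r(s)$ satisfying $s/g_r(s)=1+(1-s)/g_{r-1}(s)$, which rearranges to the recursion stated in the theorem. The Gibbs identity then gives $\sum_{I_n}|I_n|^s B_n^{-g_r(s)}\asymp\exp\bigl(n\,\mathsf{P}(T,-g_r(s)\tau_{\min}h-s\log|T'|)\bigr)$, which decays in $n$ precisely when $s>s^{(r)}_{\mathbb{N}}$; the Hausdorff--Cantelli lemma then yields $\mathcal{H}^s(\mathcal{R}_r(\tau_{\min}))=0$.

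For the lower bound $\dim_{\mathrm{H}}\mathcal{R}_r(\tau)\geq s^{(r)}_{\mathbb{N}}$, I construct a Cantor subset $\mathcal{F}\subseteq\mathcal{R}_r(\tau)$ localised near a point where $\tau$ attains $\tau_{\min}$. Fix a rapidly growing sequence $n_1<n_2<\cdots$ and an integer truncation $M$. In the ``free'' windows $(n_k+r,n_{k+1}]$, each partial quotient ranges over $\{1,\ldots,M\}$, so $\mathcal{F}$ locally resembles a Bowen--Gauss repeller whose dimension tends to $1$ as $M\to\infty$. In each ``critical'' block $[n_k+1,n_k+r]$, restrict $a_{n_k+i}$ to a dyadic range $[B_{n_k}^{t_i(s)},\,2B_{n_k}^{t_i(s)}]$ with $\sum_i t_i(s)=1$, choosing the exponents by the same recursion so as to maximise branching at target dimension $s<s^{(r)}_{\mathbb{N}}$. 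Equipping $\mathcal{F}$ with the natural branching measure $\mu$, a local diameter estimate comparing $\rho$ to the critical-block cylinder scale (combined with the pressure definition of $s^{(r)}_{\mathbb{N}}$ to guarantee enough ``room'' in the free windows) yields $\mu(B(x,\rho))\leq C\rho^s$, so the mass distribution principle gives $\dim_{\mathrm{H}}\mathcal{F}\geq s$.

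The main obstacle is arranging the iterated covering in the upper bound (and the dual branching in the lower bound) so that the nonlinear recursion for $g_r(s)$ emerges, rather than the cruder linear exponent produced by the naive single-scale cover. The key move is the ``condition on $a_{n+1}$ and invoke the $(r-1)$-case'' reduction together with the balanced split at $A_1\asymp B_n$; matching this in the Cantor construction with the optimal exponents $t_i(s)$ is the delicate combinatorial point. All other ingredients---the tempered distortion of $h$, the reduction to the minimum of $\tau$, and the passage between cylinder sums and pressure---are routine once this structure is in place.
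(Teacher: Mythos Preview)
Your overall strategy matches the paper's: induction on $r$ for the upper bound, and a Cantor set plus mass distribution for the lower bound, with the reduction to $\tau_{\min}$ handled the same way. The lower-bound sketch is essentially what the paper does (free windows with digits $\le M$, critical blocks with dyadically restricted digits whose exponents are tuned to $g_r$), so no issue there.

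Your upper-bound induction is organised differently from the paper's, and as written contains a genuine slip. The paper passes from $k$ to $k+1$ by splitting the \emph{limsup set} according to whether the product of the first $k$ partial quotients exceeds $\gamma^{\tau_{\min}S_nh}$: the ``large product'' piece is itself a set of the form $\mathcal C_k$ (with base $\gamma$ in place of $e$), so the \emph{dimension} inductive hypothesis applies directly; the ``small product'' piece is covered at level $n+k$ and gives a second bound; equating the two potentials forces $\log\gamma=g_{k+1}(s)/g_k(s)$ and hence the recursion. You instead run a local cover--$s$-sum induction, conditioning on $a_{n+1}=A_1$. That is a legitimate alternative, but the split you describe---using the full cylinder $I_{n+1}$ for $A_1\ge B_n$ and the inductive cover for $A_1<B_n$---yields
\[
\sum_{A_1<B_n}\Bigl(\tfrac{|I_n|}{A_1^2}\Bigr)^s\Bigl(\tfrac{B_n}{A_1}\Bigr)^{-g_{r-1}(s)}
\ \asymp\ |I_n|^s\,B_n^{\,1-2s},
\]
not $|I_n|^sB_n^{-g_r(s)}$ (for $r=2$ one gets $B_n^{1-2s}$ rather than $B_n^{-s^2}$, and $(1-s)^2>0$ shows these differ). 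What actually produces the recursion is to split at a \emph{variable} threshold $C=B_n^{\,g_{r-1}(s)/(1-s+g_{r-1}(s))}<B_n$, cover $\{a_{n+1}\ge C\}$ by a \emph{single} interval of length $\asymp|I_n|/C$, and apply the inductive cover only for $A_1<C$; both pieces then contribute $|I_n|^sC^{-s}=|I_n|^sB_n^{-g_r(s)}$. Your phrase ``optimising the split'' presumably intends exactly this, but the preceding sentence fixes the cutoff at $B_n$, so the argument as written does not deliver the claimed exponent. Once you make the cutoff $C$ explicit (this is the exact analogue of the paper's choice of $\gamma$), your route and the paper's are equivalent.
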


Theorem \ref{ddd} is more general as for different $\tau(x)$ and $h(x)$ it implies various classical results as we now see.
 \begin{itemize}
\item When $r=1,$ $\tau(x)=$ constant and $h(x)=\log\vert T^{'}\vert,$ then we obtain the classical Jarn\'ik--Besicovitch theorem \cite{Bes34, Jar31}   .
\begin{cor} 
For  any $\tau\geq2,$ 
 \begin{equation*}\dim _{\mathrm{H}}J(\tau)=\frac{2}{\tau}.\end{equation*}
\end{cor}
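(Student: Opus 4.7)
The plan is to derive the Jarn\'ik--Besicovitch dimension formula as a direct application of Theorem~\ref{ddd} by a judicious choice of parameters. First I would set $r = 1$, take $\tau(x)$ to be the constant function with value $(\tau - 2)/2$, and choose $h(x) = \log |T'(x)|$. With these choices, the defining condition for $\mathcal{R}_1(\tau)$ becomes
$$a_{n+1}(x)\ \geq\ e^{((\tau-2)/2)\, S_n(\log|T'(x)|)}\qquad\text{for i.m. } n\in\N,$$
which coincides (up to a harmless shift of the partial-quotient index, which does not affect Hausdorff dimension) with the set $J(\tau)$ as rewritten in \eqref{Jset}. A preliminary check needed here is that $h(x)=\log|T'(x)|=-2\log x$ enjoys the tempered distortion property on $[0,1]$, which is a classical and well documented fact about the Gauss map.

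Next I would simplify the expression for $s^{(1)}_{\N}$ provided by Theorem~\ref{ddd}. Since $g_1(s)=s$ and $\tau_{\min}=(\tau-2)/2$, the potential appearing inside the pressure function reduces to
$$-s\cdot\frac{\tau-2}{2}\cdot\log|T'|\;-\;s\log|T'|\;=\;-\frac{s\tau}{2}\log|T'|.$$
Consequently, by Theorem~\ref{ddd},
$$\dim_{\mathrm{H}} J(\tau)\;=\;\inf\Bigl\{s\geq 0\,:\, \mathsf{P}\bigl(T,\ -\tfrac{s\tau}{2}\log|T'|\bigr)\leq 0\Bigr\}.$$

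Finally I would invoke the standard Bowen--type identity for the Gauss map: the function $t\mapsto \mathsf{P}(T,-t\log|T'|)$ is continuous, strictly decreasing on its domain of finiteness, and vanishes precisely at $t=1$ (reflecting the fact that $\dim_{\mathrm{H}}[0,1]=1$ via the thermodynamic formalism for the infinite IFS of Gauss inverse branches). This identifies the unique zero of the pressure in the displayed formula above by $s\tau/2=1$, whence $\dim_{\mathrm{H}} J(\tau)=2/\tau$. The only place that requires any care is the identification of $\mathcal{R}_1$ with $J(\tau)$ and the tempered distortion check for $\log|T'|$; both are routine and standard in the metric theory of continued fractions, so no substantial obstacle arises beyond applying the main theorem.
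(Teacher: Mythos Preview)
Your proposal is correct and follows precisely the route the paper indicates: specialise Theorem~\ref{ddd} with $r=1$, constant $\tau(x)=(\tau-2)/2$, and $h=\log|T'|$, then reduce the pressure condition via the Bowen identity $\mathsf{P}(T,-t\log|T'|)=0\iff t=1$. The paper states the corollary by listing exactly these parameter choices without writing out the last two steps, so your added detail on the potential simplification and the Bowen formula is exactly what is needed to complete the argument.
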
  
 \item When $r=1,$ we obtain the result by Wang \textit{et al.} \cite{WaWuXu16}.
 \begin{cor} 
\begin{equation*}
 \dim _{\mathrm{H}}    \left\{ x\in [ 0,1):a_{n+1}(x)\geq e^{\tau(x) S_nh(x)      } 
\text{ for i.m. }n\in \mathbb{N}\right\}=s^{(1)}_{\mathbb{N}}.\end{equation*}
\end{cor}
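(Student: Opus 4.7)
The statement is precisely the $r=1$ specialization of Theorem \ref{ddd}, so the plan is to simply invoke the main theorem and verify that the resulting dimension formula matches the claimed $s^{(1)}_{\mathbb N}$. First I would set $r=1$ in the definition of $\mathcal{R}_r(\tau)$: the product $a_{n+1}(x)\cdots a_{n+r}(x)$ collapses to the single factor $a_{n+1}(x)$, so $\mathcal{R}_1(\tau)$ coincides verbatim with the set appearing in the corollary. The standing hypotheses on $\tau$ and $h$, including the tempered distortion property, are inherited from Theorem \ref{ddd}, so the theorem applies without any additional checking.

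Next I would read off $g_1(s)$ from the recursion: by the base case stipulated in Theorem \ref{ddd}, $g_1(s)=s$, and no further iteration is needed. Substituting into the general dimension formula yields
\[
\dim_{\mathrm H}\mathcal{R}_1(\tau)=\inf\{s\geq 0:\mathsf{P}(T,-g_1(s)\tau_{\min}h-s\log|T'|)\leq 0\}=\inf\{s\geq 0:\mathsf{P}(T,-s\tau_{\min}h-s\log|T'|)\leq 0\},
\]
which is exactly the quantity $s^{(1)}_{\mathbb N}$ recorded in the introduction when the Wang--Wu--Xu theorem from \cite{WaWuXu16} was stated. Hence the two sides of the claimed equality agree.

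There is no genuine obstacle here: the entire content of the corollary is packaged inside Theorem \ref{ddd}, and the only verification to perform is the trivial collapse $g_1(s)=s$, which reduces the general formula to the Wang--Wu--Xu formula. The proof is therefore a one-line appeal to Theorem \ref{ddd} with $r=1$ fixed, together with the observation that the case $r=1$ of the main theorem recovers the Wang--Wu--Xu result exactly, as already foreshadowed in the discussion preceding the corollary.
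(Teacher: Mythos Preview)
Your proposal is correct and mirrors the paper's approach exactly: the corollary is listed there simply as the $r=1$ specialization of Theorem~\ref{ddd}, with no further argument given. The only verification needed is that $g_1(s)=s$, which collapses the general dimension formula to the Wang--Wu--Xu quantity $s^{(1)}_{\mathbb N}$, precisely as you have written.
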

 \item When $r=1,\tau(x)=1 \ {\rm and} \ h(x)=\log B,$ we obtain the result by Wang and Wu \cite{WaWu08}.
 \begin{cor} For any $B>1,$
\begin{equation*}
 \dim _{\mathrm{H}}    \left\{ x\in [ 0,1):a_{n+1}(x)\geq B^{n}
\text{ for i.m. }n\in \mathbb{N}\right\}=s^{(1)}_{B}\end{equation*}
  where 
  $$s^{(1)}_{B}=\inf \{s\geq 0 :\mathsf{P}(T, -s\log B-s\log |T^{\prime
}|)\le 0\}.$$
\end{cor}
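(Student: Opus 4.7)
The plan is to derive this corollary as a direct specialization of Theorem~\ref{ddd} by choosing $r=1$, $\tau(x)\equiv 1$, and $h(x)\equiv \log B$. First I would verify that these choices satisfy the hypotheses of the theorem. Since $B>1$, the constant function $h\equiv \log B$ is continuous and strictly positive on $[0,1]$, so $h\colon [0,1]\to(0,\infty)$ as required. Any constant function trivially satisfies the tempered distortion property, since for every $n$ and every pair of points $x,y$ lying in any cylinder one has $|S_{n}h(x)-S_{n}h(y)|=0$. The function $\tau\equiv 1$ is continuous and nonnegative with $\tau_{\min}=1$.

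Next I would translate the defining condition. With these choices,
\begin{equation*}
S_{n}h(x)=\sum_{k=0}^{n-1}h(T^{k}x)=n\log B,
\end{equation*}
so the inequality $a_{n+1}(x)\ge e^{\tau(x)S_{n}h(x)}$ becomes exactly $a_{n+1}(x)\ge B^{n}$. Hence the set $\mathcal{R}_{1}(\tau)$ of Theorem~\ref{ddd} coincides with the set in the corollary.

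Finally I would translate the dimension formula. For $r=1$ the recursion gives $g_{1}(s)=s$, and substituting into the pressure expression from Theorem~\ref{ddd} yields
\begin{equation*}
-g_{1}(s)\,\tau_{\min}\,h(x)-s\log|T'(x)| \;=\; -s\log B-s\log|T'(x)|,
\end{equation*}
so that
\begin{equation*}
\dim_{\mathrm H}\mathcal{R}_{1}(\tau)=s^{(1)}_{\mathbb N}=\inf\{s\ge 0: \mathsf{P}(T,-s\log B-s\log|T'|)\le 0\}=s^{(1)}_{B}.
\end{equation*}

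The only point requiring any attention is the verification of the tempered distortion property for a constant potential, which is immediate. Consequently no genuine obstacle arises: the corollary follows by direct substitution into Theorem~\ref{ddd}.
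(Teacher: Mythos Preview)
Your proposal is correct and takes essentially the same approach as the paper: the corollary is stated immediately after Theorem~\ref{ddd} as the specialization $r=1$, $\tau(x)=1$, $h(x)=\log B$, with no separate proof given. Your verification that the constant potential satisfies the tempered distortion property and that the defining condition and pressure expression reduce appropriately is exactly what is (implicitly) needed.
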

\item When $\tau(x)=1 \ {\rm and} \ h(x)=\log B,$ we obtain the result by 

\noindent Huang \textit{et al.}
\cite {HuWuXu}.
\begin{cor} For any $B>1,$  
 \begin{align*} 
      \dim _{\mathrm{H}}      \left\{x\in[0, 1): a_{n+1}(x)\cdots a_{n+r}(x)\geq B^{n} 
\text{ for i.m. }n\in\mathbb{N}\right\}=s^{(r)}_{B},
\end{align*}  
 where 
  $$s^{(r)}_{B}=\inf \{s\geq 0 :\mathsf{P}(T, -g_{r}(s)\log B-s\log |T^{\prime
}|)\le 0\}.$$  \end{cor}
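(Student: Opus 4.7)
The plan is to derive this corollary as a direct specialization of Theorem \ref{ddd} to the constant choices $\tau(x)\equiv 1$ and $h(x)\equiv\log B$. First I would verify that these choices meet the hypotheses. Since $B>1$, the constant function $h\equiv\log B$ is strictly positive and continuous on $[0,1]$, and $\tau\equiv 1$ is positive continuous with $\tau_{\min}=1$. The tempered distortion property, recalled in Section \ref{Pres}, is trivially satisfied by any constant function, because the distortion oscillations that it controls vanish identically for a constant potential. Hence Theorem \ref{ddd} applies with these data.

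Next I would rewrite the defining condition of $\mathcal R_r(\tau)$ under these choices. Because $h$ is constant, the ergodic sum collapses to
$$S_n h(x)=h(x)+h(T(x))+\cdots+h(T^{n-1}(x))=n\log B,$$
and since $\tau(x)\equiv 1$ the inequality $a_{n+1}(x)\cdots a_{n+r}(x)\ge e^{\tau(x)S_n h(x)}$ simplifies to $a_{n+1}(x)\cdots a_{n+r}(x)\ge B^n$. Therefore $\mathcal R_r(\tau)$ coincides exactly with the set whose Hausdorff dimension is asserted in the corollary.

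Finally, substituting $\tau_{\min}=1$ and $h\equiv\log B$ into the dimension formula provided by Theorem \ref{ddd} gives
$$\dim_{\mathrm H}\mathcal R_r(\tau)=\inf\{s\ge 0:\mathsf P(T,-g_r(s)\log B-s\log|T'|)\le 0\}=s^{(r)}_B,$$
with the same recursion $g_1(s)=s$ and $g_r(s)=sg_{r-1}(s)/(1-s+g_{r-1}(s))$. This is precisely the statement of the corollary. The argument is entirely an instantiation of Theorem \ref{ddd}, so there is no substantive obstacle; the only point demanding any verification is the trivial observation that constant functions satisfy the tempered distortion property, after which both the set and the dimension formula match by direct substitution.
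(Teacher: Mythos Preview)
Your proposal is correct and follows exactly the approach the paper takes: the corollary is stated as a direct specialization of Theorem \ref{ddd} with $\tau(x)\equiv 1$ and $h(x)\equiv\log B$, and you have verified the hypotheses and carried out the substitution precisely as intended.
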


   \item When $r=2,$ $\tau(x)={\rm constant } \text{ and } h(x)=\log\vert T^{'}\vert,$ we obtain
the result by Hussain \textit{et al.} \cite{HKWaW17}.
\begin{cor} 
 \begin{align*} 
      \dim _{\mathrm{H}}      \left\{x\in[0, 1): a_{n+1}(x)a_{n+2}(x)\geq q^{\tau +2}_{n+1}(x) 
\text{ for i.m. }n\in\mathbb{N}\right\}\\
=\frac{2}{2+\tau}.
\end{align*}  
\end{cor}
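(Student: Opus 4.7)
The plan is to deduce the corollary directly from Theorem \ref{ddd} by specialising to $r=2$, $\tau(x)\equiv\tau_0$ constant (with $\tau_0$ chosen in terms of the Hussain--Kleinbock--Wadleigh--Wang parameter $\tau$), and $h(x)=\log|T'(x)|$, and then evaluating the resulting dimension formula explicitly.

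First, I would use the classical distortion estimate $e^{S_n\log|T'(x)|}\asymp q_n(x)^2$, valid on each cylinder of generation $n$, together with the recursion $q_{n+1}\asymp a_{n+1}q_n$, to translate the condition $a_{n+1}a_{n+2}\ge q_{n+1}^{\tau+2}$ into a form comparable with the set $\mathcal{R}_2(\tau_0)$ of Theorem \ref{ddd}. A dyadic decomposition on the magnitude of $a_{n+1}$, combined with a routine covering argument, then shows that both sets yield the same Hausdorff dimension for an appropriate matching $\tau_0=\tau_0(\tau)$.

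Second, I would unfold the pressure formula. The recursion gives $g_1(s)=s$ and
\[
g_2(s)=\frac{s\,g_1(s)}{1-s+g_1(s)}=\frac{s^2}{1}=s^2,
\]
so the potential in $s^{(2)}_{\mathbb N}$ reduces to $-(s^2\tau_0+s)\log|T'(x)|$. By Bowen's formula for the Gauss map, $\mathsf{P}(T,-t\log|T'|)\le 0$ precisely when $t\ge 1$, so $s^{(2)}_{\mathbb N}$ is the positive root of $s^2\tau_0+s=1$. Inverting the matching $\tau_0=\tau_0(\tau)$ from the first step identifies this root with $\frac{2}{\tau+2}$, establishing the claim.

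The main obstacle lies in the first step: the HKWW condition involves $q_{n+1}$, hence the ``next'' partial quotient $a_{n+1}$, which is not directly captured by the Birkhoff sum $S_nh(x)$ over the first $n$ digits. A careful decomposition according to the magnitude of $a_{n+1}$, combined with finite-alphabet approximation to control the pressure formula of Theorem \ref{ddd}, is needed to make the identification rigorous without affecting the final Hausdorff dimension; this is the standard route familiar from the analysis of Dirichlet non-improvable sets.
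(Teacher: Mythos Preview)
Your overall route---specialise Theorem~\ref{ddd} to $r=2$, $\tau$ constant, $h=\log|T'|$, then evaluate the pressure---is exactly what the paper does: the corollary is listed there as an immediate consequence of Theorem~\ref{ddd} with no separate proof. Your computation of $g_2(s)=s^2$ is correct, and the reduction via Bowen's formula to the equation $s^2\tau_0+s=1$ is the right way to extract a numerical dimension from the pressure condition.

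Where your proposal has a genuine gap is in the ``matching $\tau_0=\tau_0(\tau)$''. You never write down what $\tau_0$ is, and this is not cosmetic. With $h=\log|T'|$ one has $e^{\tau_0 S_n h(x)}\asymp q_n^{2\tau_0}$, so $\mathcal R_2(\tau_0)$ is (up to constants) the set $\{a_{n+1}a_{n+2}\ge q_n^{2\tau_0}\ \text{i.m.}\}$, whereas the HKWW set involves $q_{n+1}^{\tau+2}\asymp a_{n+1}^{\tau+2}q_n^{\tau+2}$. The presence of $a_{n+1}^{\tau+2}$ on the right couples the threshold to one of the two partial quotients on the left, so a dyadic decomposition on $a_{n+1}$ does \emph{not} produce a single limsup set of the form $\mathcal R_2(\tau_0)$; it produces a one-parameter family indexed by the dyadic scale, and you would have to optimise over that family and show the optimum gives the same dimension. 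That is essentially the original HKWW computation (or the $r=2$ instance of the Huang--Wu--Xu analysis), not a ``routine covering argument''. In particular, solving $s^2\tau_0+s=1$ for $s=2/(\tau+2)$ forces $\tau_0=\tau(\tau+2)/4$, and you have given no mechanism that selects this value from your decomposition.

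To be fair, the paper itself does not carry out this identification either; it simply cites \cite{HKWaW17} and states the formula. So your approach is aligned with the paper's, and the pressure calculation is fine, but the step you label ``standard'' is precisely the nontrivial content of the HKWW paper and should not be presented as a corollary of Theorem~\ref{ddd} alone without that extra input.
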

\end{itemize}

\section{Preliminaries}\label{adef}
In this section we gather some fundamental properties of continued
fractions of real numbers and recommend \cite{Khi_63, Kr_16} to the reader for further details.

For any vector ${(a_{1},\dots ,a_{n})\in \mathbb{N}^{n}}$
with $n\in \mathbb{N},$ define the \textit{basic cylinder} of order $n$ as
\begin{equation*}
I_{n}=I_{n}(a_{1},\dots ,a_{n}):=\left\{ x\in [
0,1):a_{1}(x)=a_{1},\dots ,a_{n}(x)=a_{n}\right\}.  
\end{equation*}

\begin{pro}
\label{pp3}
Let $n\geq1$ and $(a_{1},\cdots,a_{n})\in\N^{n}.$ Then

\noindent $\rm{(i)}$
\begin{equation*}
I_{n}(a_{1},a_{2},\dots ,a_{n})=\left\{ 
\begin{array}{ll}
\left[ \frac{p_{n}}{q_{n}},\frac{p_{n}+p_{n-1}}{q_{n}+q_{n-1}}\right) & 
\text{if n is even}, \\ 
\left( \frac{p_{n}+p_{n-1}}{q_{n}+q_{n-1}},\frac{p_{n}}{q_{n}}\right] & 
\text{if n is odd},
\end{array}
\right.  
\end{equation*}
 where the numerator $p_{n}=p_{n}(x)$ and the denominator $q_{n}=q_{n}(x)$ of the nth convergent of $x$ are obtained from the
following recursive relation
\begin{equation}
\begin{split}
p_{-1}& =1,~p_{0}=0,~p_{n+1}=a_{n+1}p_{n}+p_{n-1}, \\
q_{-1}& =0,~q_{0}=1,~q_{n+1}=a_{n+1}q_{n}+q_{n-1}.
\end{split}
\label{recu}
\end{equation}
Also,  $p_{n-1}q_{n}-p_{n}q_{n-1}=(-1)^{n}$ for all $n\geq1.$

\noindent $\rm{(ii)}$ The length of $I_{n}(a_{1},a_{2},\dots ,a_{n})$ is given by
\begin{equation}  \label{cyle}
\frac{1}{2q_{n}^{2}}\leq |I_{n}(a_{1},\ldots ,a_{n})|=\frac{1}{
q_{n}(q_{n}+q_{n-1})}\leq \frac{1}{q_{n}^{2}}.
\end{equation}

\noindent $\rm{(iii)}$ $q_{n}\geq 2^{(n-1)/2}$ and for
any $1\leq k \leq n,$ 
\begin{equation}\label{eq P3}
\begin{aligned}
&&q_{n+k}(a_{1},\ldots ,a_{n},a_{n+1}\ldots ,a_{n+k})\geq q_{n}(a_{1},\ldots
,a_{n})\cdot q_{k}(a_{n+1},\ldots ,a_{n+k}),   \\
&&q_{n+k}(a_{1},\ldots ,a_{n},a_{n+1}\ldots ,a_{n+k})\leq
2q_{n}(a_{1},\ldots ,a_{n})\cdot q_{k}(a_{n+1},\ldots ,a_{n+k}). 
\end{aligned}
\end{equation}
 \noindent $\rm{(iv)}$ 
\begin{equation*}
\frac{1}{(a_{n+1}+2)q_{n}^{2}}\,<\,\Big|x-\frac{p_{n}}{q_{n}}\Big|=\frac{1}{
q_n(q_{n+1}+T^{n+1}(x)q_n)}<\,\frac{1}{a_{n+1}q_{n}^{2}},
\end{equation*}
and the derivative of $T^{n}$ is given by
\begin{equation*}  
(T^{n})^{\prime }(x)=\frac{(-1)^{n}}{(xq_{n-1}-p_{n-1})^{2}}.
\end{equation*}
Further,
 \begin{equation}\label{eq1P3}
 q_{n}^{2}(x)\leq \prod^{n-1}_{k=0}|T'(T^{k}(x))|\leq 4q^{2}_{n}(x).\end{equation}
\end{pro}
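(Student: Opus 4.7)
The proof establishes the two-sided equality $\hdim \mathcal{R}_r(\tau) = s^{(r)}_{\N}$ by separate covering and mass-distribution arguments, following the strategy of Wang--Wu--Xu (for $r=1$) and Huang--Wu--Xu (for the constant-threshold product case), but adapted to the localised exponent $\tau(x) S_n h(x)$.

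\textbf{Upper bound ($\hdim \mathcal{R}_r(\tau) \leq s^{(r)}_{\N}$).} Write $\mathcal{R}_r(\tau) = \bigcap_{N \geq 1} \bigcup_{n \geq N} E_n$ where $E_n = \{x : a_{n+1}(x)\cdots a_{n+r}(x) \geq e^{\tau(x) S_n h(x)}\}$. For fixed $n$ the plan is to decompose $E_n$ as $\bigsqcup_{k=1}^{r} F_{n,k}$, where $F_{n,k}$ consists of those $x$ for which $k$ is the smallest index with $a_{n+1}(x)\cdots a_{n+k}(x) \geq e^{\tau(x) S_n h(x)}$. The key point is that $F_{n,k}$ should not be covered by individual $(n+k)$-cylinders (which diverges at the critical exponent), but by unions over the last coordinate: fixing $(a_1,\ldots,a_n,b_1,\ldots,b_{k-1})$ with $\prod_{i<k} b_i < e^{\tau(\xi) S_n h(\xi)}$ for some $\xi \in I_n$, one collects all cylinders $I_{n+k}(a_1,\ldots,a_n,b_1,\ldots,b_{k-1},b_k)$ with $b_k$ above the induced threshold $L_k$. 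By Proposition~\ref{pp3}(ii)--(iii) this union forms a single subinterval of $I_{n+k-1}$ of length comparable to $|I_{n+k-1}|/L_k$. Raising these lengths to the power $s$, summing over $(b_1,\ldots,b_{k-1})$, and then over $(a_1,\ldots,a_n)$, and using the tempered distortion property of $h$ to replace $S_n h(x)$ by $S_n h(\xi)$ on each cylinder up to a bounded multiplicative error, together with the continuity of $\tau$ to extract $\tau_{\min}$, produces an upper estimate of the form
$$\sum_{(a_1,\ldots,a_n) \in \N^{n}} e^{-g_r(s)\, \tau_{\min}\, S_n h(\xi_{I_n})}\, |I_n|^{s}.$$
The recursion $g_r(s) = s g_{r-1}(s)/(1-s+g_{r-1}(s))$ is precisely the identity that arises when one matches the $k=r$ piece of the decomposition against the sum of the $k<r$ pieces at the same exponent: peeling off the innermost summation $\sum_{b_k \geq L_k} b_k^{-2s} \asymp L_k^{1-2s}$ shifts the exponent on the product $\prod_{i<k} b_i$, and imposing this to be consistent across $k$ yields exactly the given recursion. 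By the definition of $\mathsf{P}$ and of $s^{(r)}_{\N}$, the displayed sum decays exponentially whenever $s > s^{(r)}_{\N}$, so the Hausdorff--Cantelli lemma gives $\mathcal{H}^s(\mathcal{R}_r(\tau)) = 0$.

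\textbf{Lower bound ($\hdim \mathcal{R}_r(\tau) \geq s^{(r)}_{\N}$).} For $s < s^{(r)}_{\N}$, the strict positivity $\mathsf{P}(T, -g_r(s)\tau_{\min} h - s \log|T'|) > 0$ persists after truncating the Gauss alphabet to $\{1,\ldots,M\}$ for $M$ large. The plan is to build a Cantor-like set $F_M \subset \mathcal{R}_r(\tau)$ along a rapidly increasing sequence $(n_j)$ by decreeing that positions outside the designated free blocks $\{n_j+1,\ldots,n_j+r\}$ carry partial quotients bounded by $M$, while inside each free block the partial quotients obey $\prod_{i=1}^{r} a_{n_j+i}(x) \asymp e^{\tau_{\min} S_{n_j} h(x)}$ with the product distributed among the $r$ positions according to an allocation dictated by the ratios $g_k(s)/g_{k-1}(s)$. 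A Gibbs-type probability measure $\mu_M$ is placed on $F_M$ using the potential $-g_r(s)\tau_{\min} h - s\log|T'|$. The Hausdorff dimension $\hdim F_M \geq s$ then follows from the mass distribution principle, which requires the Frostman bound $\mu_M(B(x,\rho)) \leq C \rho^{s}$ for all small $\rho$; this is verified by a case analysis on whether $\rho$ corresponds to a scale between successive cylinders outside or inside a free block, using the cylinder length and $q_n$-growth estimates of Proposition~\ref{pp3} together with the tempered distortion of $h$ (which stabilises the exponential threshold across cylinders). Letting $M \to \infty$ and then $s \nearrow s^{(r)}_{\N}$ completes the lower bound.

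\textbf{Main obstacle.} The principal difficulty lies in the Frostman estimate inside the free blocks. Because a single coordinate $a_{n_j+i}$ can be exponentially large subject only to a product constraint, a ball of intermediate radius may intersect cylinders of several different depths $n_j + k$ for $1 \leq k \leq r$, and a naive bound would allow the measure density to spike. The recursive structure of $g_r(s)$ is reflected exactly in how the weights must be allocated across the $r$ positions so that $r$ parallel intermediate-scale estimates close out simultaneously, and coupling these estimates---each one the analogue, at a different depth, of the upper-bound computation above---is the technical heart of the argument.
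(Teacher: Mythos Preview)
Your proposal does not address the stated Proposition at all. The statement you were asked to prove is Proposition~\ref{pp3}, a list of elementary and classical identities for continued fractions: the explicit form of the cylinder $I_n$, the formula and two-sided bound for $|I_n|$, the sub- and super-multiplicativity of $q_n$, the distance $|x-p_n/q_n|$, the derivative of $T^n$, and the comparison $q_n^2 \asymp \prod_{k<n}|T'(T^kx)|$. In the paper this proposition is not proved; it is quoted as standard material from Khintchine's monograph on continued fractions, and each item follows by a short induction on $n$ using the recursions~\eqref{recu} and the identity $p_{n-1}q_n-p_nq_{n-1}=(-1)^n$.

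What you have written is instead a sketch of the proof of the \emph{main} result, Theorem~\ref{ddd}, establishing $\hdim \mathcal{R}_r(\tau)=s^{(r)}_{\N}$ by a covering argument for the upper bound and a Cantor-set plus mass-distribution argument for the lower bound. That sketch is broadly in line with the paper's proof of Theorem~\ref{ddd} (the paper does the upper bound by induction on $r$ rather than by a simultaneous decomposition over the index $k$, but the mechanism is the same, and the lower bound is as you describe). However, none of this bears on Proposition~\ref{pp3}. If you intended to prove Proposition~\ref{pp3}, you need to supply the short direct verifications of (i)--(iv); if you intended to prove Theorem~\ref{ddd}, you have simply mislabelled your target.
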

From \eqref{recu} note that for any $n\geq1,$ $p_{n}$ and $q_{n}$ are determined by $
a_{1},\dots,a_{n}.$ Thus, we can write $p_{n}$ as $p_{n}(a_{1},\dots,a_{n})$ and 
$q_{n}$ as $q_{n}(a_{1},\dots,a_{n}).$ Just to avoid confusion, we can also use the notion $a_{n}$ and $q_{n}$ in place of $a_{n}(x)$
and $q_{n}(x),$ respectively.

The next theorem, known as Legendre's theorem, connects
one dimensional Diophantine approximation with continued fractions. 
\begin{legendre}
 Let $(p,q)\in \Z \times \N$. If
\begin{equation*} \Big|x-\frac pq\Big|<\frac1{2q^2} \quad {\rm then } \quad \frac pq=\frac{p_n(x)}{
q_n(x)},
\end{equation*}
$\quad \mathrm{for\ some \ } n\geq 1.$
\end{legendre}
According to Legendre's theorem if an irrational $x$ is well approximated
by a rational $\frac{p}{q}$, then this rational must be a convergent of $x$.
Thus in order to find good rational approximates to an irrational number we
only need to focus on its convergents. Note that, from $\rm{(iv)}$
of Proposition~\ref{pp3},  a real number $x$ is well approximated by its
convergent $\frac{p_{n}}{q_{n}}$ if its $(n+1)$th partial quotient is sufficiently large.

\section{The Pressure function and $s^{(r)}_\N$} \label{Pres}

In this section, we recall the definition of pressure function and collect some of its basic properties which will be useful for proving the main result of this article.
For thorough results on
pressure functions in infinite conformal iterated function systems, we refer the reader 
to \cite{ MaUr96, MaUr99, MaUr03, Wa_82}. However, for our purposes, we use the Mauldin and Urba\'{n}ski \cite{MaUr99} form of pressure function
applicable to the geometry of
continued fractions. 

Consider a finite or infinite subset $\mathcal{B}$ of the set of natural numbers and define
\begin{equation*}
Z_{\mathcal{B}}=\{x\in [0,1):a_{n}(x)\in 
\mathcal{B} \text{ for all } n\geq 1        \}.
\end{equation*}
Then $(Z_{\mathcal{B}},T)$ is a subsystem of $([0,1),T)$ where $T$ represents the 
Gauss map. Let $\psi :[0,1)\rightarrow \mathbb{R}$ be a function. Then the pressure function $\mathsf{P}_{\mathcal{B}}(T,\psi )$ with respect to potential $\psi$ and restricted to the system $(Z_{\mathcal{B}},T)$ is given by 

\begin{equation}
\mathsf{P}_{\mathcal{B}}(T,\psi):=\lim_{n\rightarrow \infty }\frac{1}{n}
\log \sum_{a_{1},\cdots ,a_{n}\in \mathcal{B}}\sup_{x\in Z_{\mathcal{B}
}}e^{S_{n}\psi ([a_{1},\cdots ,a_{n}+x])},  \label{5}
\end{equation}
where $S_{n}\psi (x)$ denotes the ergodic sum $\psi (x)+\cdots
+\psi (T^{n-1}(x))$. For $\mathcal{B}=\mathbb{N}$ we denote $\mathsf{P}_{\mathbb{N}}(T,\psi )$ by $
\mathsf{P}(T,\psi).$  

The $n$th variation of a function $\psi,$ denoted by ${\rm Var}_{n}(\psi),$ is defined as 
  \begin{equation*}
 {\rm Var}_{n}(\psi)~:~=\sup_{x,z\,  : \, I_{n}(x) = I_{n}(z)} \vert \psi(x)-\psi(z)\vert,\end{equation*}
 where $I_{n}(x)$ represents the basic cylinder of order $n$  that contains $x$ in the continued fraction expansion, that is $I_{n}(x)=I_{n}(a_{1}(x),\cdots,a_{n}(x))$ (for the definition of cylinder, see Section~\ref{adef}).
 
 A function $\psi$ is said to satisfy the \emph {tempered distortion property} if 
 \begin{equation}\label{tdp}
 {\rm Var}_{1}(\psi)<\infty \ {\rm and } \ \lim_{n\to \infty} {\rm Var}_{n}(\psi)=0.
 \end{equation}

Throughout the article, we consider $\psi :[0,1)\rightarrow \mathbb{R}$ to be a function satisfying the tempered distortion property. The existence of the limit in the definition of the pressure function is guaranteed by the following proposition.
\begin{pro}\textup{\cite[Proposition 2.4]{BiWaWuXu14}}
 The limit defining $\mathsf{P}_\mathcal{B}(T,\psi)$ exists. Furthemore if $\psi:[0,1)\to \mathbb{R}$ be a function satisfying tempered distortion property then the value of $\mathsf{P}_\mathcal{B}(T,\psi)$ remains unchanged
even without taking supremum over $x\in Z_\mathcal{B}$ in \eqref{5}.
The independence of $\mathsf{P}_\mathcal{B}(T,\psi)$ on the supremum over $x\in Z_\mathcal{B}$ follows from the fact that 
\begin{equation*}
\vert {S_{n}\psi ([a_{1},\cdots ,a_{n}+x_{1}])}-  {S_{n}\psi ([a_{1},\cdots ,a_{n}+x_{2}])}     \vert \leq \sum_{j=1}^{n}{\rm Var}_{j}(\psi)=o(n),
\end{equation*}
for any $x_{1},x_{2} \in Z_\mathcal{B}.$
\end{pro}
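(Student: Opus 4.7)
The plan is to prove existence of the limit via approximate submultiplicativity of the relevant partition sums, and then to deduce independence from the choice of reference point in each cylinder directly from the tempered distortion bound quoted in the proposition.

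Write $Z_n := \sum_{a_1,\dots,a_n\in\mathcal{B}}\sup_{x\in Z_\mathcal{B}} e^{S_n\psi([a_1,\dots,a_n+x])}$; the goal is existence of $\lim n^{-1}\log Z_n$. First I would record the key distortion estimate: for any block $(a_1,\dots,a_{m+n})\in\mathcal{B}^{m+n}$ and any $x,y\in Z_\mathcal{B}$, the iterates $T^j[a_1,\dots,a_{m+n}+x]$ and $T^j[a_1,\dots,a_m+y]$ share the partial quotients $a_{j+1},\dots,a_m$, hence lie in a common basic cylinder of order $m-j$. Summing over $j=0,\dots,m-1$ gives
\begin{equation*}
\bigl| S_m\psi([a_1,\dots,a_{m+n}+x]) - S_m\psi([a_1,\dots,a_m+y]) \bigr| \;\le\; \sum_{k=1}^m \mathrm{Var}_k(\psi) \;=:\; C_m,
\end{equation*}
and $C_m = o(m)$ by Ces\`{a}ro averaging since $\mathrm{Var}_k(\psi)\to 0$.

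Next I would exploit the orbit decomposition $S_{m+n}\psi = S_m\psi + S_n\psi\circ T^m$ together with $T^m[a_1,\dots,a_{m+n}+x] = [a_{m+1},\dots,a_{m+n}+x]$. Bounding the $S_m$-piece by its value on $[a_1,\dots,a_m+y]$ (with error $C_m$) decouples the front block from the tail, and factoring the double sum over $\mathcal{B}^{m+n}=\mathcal{B}^m\times\mathcal{B}^n$ yields
\begin{equation*}
Z_{m+n} \;\le\; e^{C_m}\, Z_m\, Z_n, \qquad\text{so}\qquad \log Z_{m+n} \;\le\; \log Z_m + \log Z_n + o(m).
\end{equation*}
An almost-subadditive Fekete argument then gives existence of $\lim n^{-1}\log Z_n$: fixing $n$, iterating the inequality on multiples $kn+r$ with $0\le r<n$, and letting $k\to\infty$ first and $n\to\infty$ second shows $\limsup m^{-1}\log Z_m \le \liminf n^{-1}\log Z_n$, because $C_n/n\to 0$ kills the correction.

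Finally, to drop the supremum I would use the bound already stated in the proposition: for any $x_1,x_2\in Z_\mathcal{B}$,
\begin{equation*}
\bigl| S_n\psi([a_1,\dots,a_n+x_1]) - S_n\psi([a_1,\dots,a_n+x_2]) \bigr| \;\le\; \sum_{j=1}^n \mathrm{Var}_j(\psi) \;=\; o(n).
\end{equation*}
Replacing $\sup_{x\in Z_\mathcal{B}}$ in the definition of $Z_n$ by evaluation at any fixed reference point multiplies each summand, and hence $Z_n$ itself, by a factor lying in $[e^{-o(n)},e^{o(n)}]$; after taking $n^{-1}\log$ and passing to the limit this correction disappears, so the pressure is unchanged. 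The only substantive step is establishing the approximate submultiplicativity $Z_{m+n}\le e^{C_m}Z_mZ_n$ with $C_m/m\to 0$; once that is in place, both the existence assertion and the insensitivity to the reference point follow from the same $o(n)$ tempered-distortion estimate.
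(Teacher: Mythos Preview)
Your argument is correct, and it supplies a proof that the paper itself omits: the proposition is quoted there from \cite{BiWaWuXu14} with only the variation inequality for the second assertion recorded as justification. Your treatment of the second part (dropping the supremum) is exactly the reasoning the paper sketches in the proposition statement.

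One simplification worth noting for the first part: with the supremum still in place you actually get \emph{exact} submultiplicativity $Z_{m+n}\le Z_mZ_n$, because for fixed $x$ the point $y:=[a_{m+1},\dots,a_{m+n}+x]$ lies in $Z_{\mathcal B}$ and $S_m\psi([a_1,\dots,a_{m+n}+x])=S_m\psi([a_1,\dots,a_m+y])\le \sup_{y'\in Z_{\mathcal B}}S_m\psi([a_1,\dots,a_m+y'])$; no $C_m$ correction is needed and the limit exists by the classical Fekete lemma without invoking tempered distortion at all. The tempered distortion hypothesis is then used only where it is genuinely required, namely to show insensitivity to the reference point. Your approximate-subadditivity route still works, it just carries an extra $e^{C_m}$ factor that is harmless but avoidable.
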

\noindent Thus if we want to take any point $z$ from the basic cylinder \\ $I_{n}(a_{1},\cdots,a_{n}),$ we can always take it as $z=\frac{p_{n}}{q_{n}}=[a_{1},\cdots,a_{n}].$

The next result by Hanus \textit{et al.} \cite{HaMU} shows that when
the Gauss system $([0,1),T)$ is approximated by $(Z_{\mathcal{B}},T)
$ then in the system of continued fractions the pressure function has a continuity property.
\begin{pro}\textup{\cite{HaMU}}
\label{l5} Let $\psi:[0,1)\to \mathbb{R}$ be a function satisfying \eqref{tdp}. Then 
\begin{equation*}
\mathsf{P}(T, \psi)=\sup\{\mathsf{P}_\mathcal{B}(T,\psi): 
\mathcal{B}\ \text{is  a finite subset of }\ \mathbb{N}\}.
\end{equation*}
\end{pro}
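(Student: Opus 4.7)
The plan is to prove the identity by establishing both inequalities.

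The direction $\sup_\mathcal{B} \mathsf{P}_\mathcal{B}(T,\psi) \leq \mathsf{P}(T,\psi)$ is immediate from the definition \eqref{5}: for any finite $\mathcal{B} \subseteq \mathbb{N}$ and every $n$, the partition sum
$$Z_n(\mathcal{B}) := \sum_{a_1,\dots,a_n \in \mathcal{B}} \sup_{x \in Z_{\mathcal{B}}} e^{S_n\psi([a_1,\dots,a_n+x])}$$
is a sub-sum of $Z_n(\mathbb{N})$, so $(1/n)\log Z_n(\mathcal{B}) \leq (1/n)\log Z_n(\mathbb{N})$, and letting $n \to \infty$ yields $\mathsf{P}_\mathcal{B}(T,\psi) \leq \mathsf{P}(T,\psi)$.

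For the reverse inequality, the strategy is an approximate supermultiplicativity obtained by concatenation, followed by monotone convergence in $\mathcal{B}$. Fix a finite $\mathcal{B} \subseteq \mathbb{N}$, so that $(Z_\mathcal{B}, T)$ is a full shift over $\mathcal{B}$ and every concatenation $(a_1,\dots,a_{kn}) \in \mathcal{B}^{kn}$ of $k$ blocks of length $n$ is admissible. Writing $S_{kn}\psi = \sum_{i=0}^{k-1} S_n\psi \circ T^{in}$, and using that two points in the same $n$-cylinder satisfy $|S_n\psi(y) - S_n\psi(y')| \leq V_n$ where $V_n := \sum_{j=1}^n \mathrm{Var}_j(\psi) = o(n)$ (a consequence of the tempered distortion property \eqref{tdp}), one obtains for each fixed sequence $(a_1,\dots,a_{kn})$,
$$\sup_x e^{S_{kn}\psi([a_1,\dots,a_{kn}+x])} \geq e^{-kV_n}\prod_{i=0}^{k-1}\sup_{z'} e^{S_n\psi([a_{in+1},\dots,a_{(i+1)n}+z'])}.$$
Summing over $\mathcal{B}^{kn}$ and factoring gives the supermultiplicative bound
$$Z_{kn}(\mathcal{B}) \geq e^{-kV_n}\,Z_n(\mathcal{B})^k.$$

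Taking $(kn)^{-1}\log$ and letting $k \to \infty$ yields $\mathsf{P}_\mathcal{B}(T,\psi) \geq \frac{1}{n}\log Z_n(\mathcal{B}) - \frac{V_n}{n}$ for every finite $\mathcal{B}$ and every $n$. Specialising to $\mathcal{B}_N = \{1,\dots,N\}$, monotone convergence gives $Z_n(\mathcal{B}_N) \nearrow Z_n(\mathbb{N})$ as $N \to \infty$ for each fixed $n$, hence
$$\sup_N \mathsf{P}_{\mathcal{B}_N}(T,\psi) \geq \frac{1}{n}\log Z_n(\mathbb{N}) - \frac{V_n}{n}.$$
Since $V_n/n \to 0$, letting $n \to \infty$ gives $\sup_N \mathsf{P}_{\mathcal{B}_N}(T,\psi) \geq \mathsf{P}(T,\psi)$. (When $\mathsf{P}(T,\psi)=+\infty$, the same chain of inequalities with $Z_n(\mathbb{N})=+\infty$ for some $n$ forces $\sup_N \mathsf{P}_{\mathcal{B}_N}(T,\psi)=+\infty$ as well.)

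The main technical point is the supermultiplicative inequality, which relies on two independent ingredients: (i) the subsystem $(Z_{\mathcal{B}},T)$ is a full shift on the alphabet $\mathcal{B}$, so concatenated blocks are automatically admissible continued-fraction prefixes; and (ii) the tempered distortion hypothesis controls the ergodic sum over a long word in terms of ergodic sums over its $n$-blocks with only a $o(kn)$ multiplicative defect. Without this second ingredient one could not convert $S_{kn}\psi$ into a product of $k$ independent copies of $S_n\psi$, and the inner argument would break down.
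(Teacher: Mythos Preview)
The paper does not actually prove this proposition; it is stated with a citation to Hanus--Mauldin--Urba\'nski \cite{HaMU} and no argument is given. Your proof is a correct, self-contained argument for the result. The easy direction is immediate, and the harder direction is handled by your supermultiplicativity estimate $Z_{kn}(\mathcal{B})\ge e^{-kV_n}Z_n(\mathcal{B})^k$ combined with a diagonal limit in $N$ and $n$.

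One small imprecision worth tightening: when you write ``monotone convergence gives $Z_n(\mathcal{B}_N)\nearrow Z_n(\mathbb{N})$'', note that the inner supremum in $Z_n(\mathcal{B}_N)$ is taken over $x\in Z_{\mathcal{B}_N}$, a set that grows with $N$ but whose union is only the badly approximable numbers, not all of $[0,1)$. So the termwise limits need not equal the summands of $Z_n(\mathbb{N})$ exactly. However, since any two points $[a_1,\dots,a_n+x]$ and $[a_1,\dots,a_n+x']$ lie in the same $n$-cylinder, the tempered distortion bound you are already using gives $\sup_{x\in Z_{\mathcal{B}_N}}e^{S_n\psi}\ge e^{-V_n}\sup_{x\in[0,1)}e^{S_n\psi}$, whence $\lim_N Z_n(\mathcal{B}_N)\ge e^{-V_n}Z_n(\mathbb{N})$. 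This only changes your final inequality to $\sup_N \mathsf{P}_{\mathcal{B}_N}(T,\psi)\ge \tfrac{1}{n}\log Z_n(\mathbb{N})-\tfrac{2V_n}{n}$, which is harmless since $V_n/n\to 0$.
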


For every $
n\geq1$ and $s\geq0,$ let
\begin{equation}
f_{n, \mathcal B}\left( s \right) =\sum_{a_{1},\ldots ,a_{n}\in \mathcal{B}}\frac{1}{e^{g_{r}(s)\tau_{\min}S_{n}h(z)}q^{2s}_{n}}, \label{gnor}
\end{equation}
where $z\in I_{n}(a_{1},\cdots,a_{n})$ and $g_{r}(s)$ is defined by the formula 
\begin{equation}\label{eq6}
g_{1}(s)=s \ {\rm and}   \ g_{r}(s)=\frac {sg_{r-1}(s)}{1-s+g_{r-1}(s)} \text{ for all } r\geq2.\end{equation}
It can be easily checked that for any $s\in(\frac{1}{2},1)$ we have $g_{r+1}(s)\leq g_{r}(s),$ for all $r \geq1.$

From now onwards we consider a particular potential $$\psi_{s}(x):=-g_{r}(s)\tau_{\min}h-s\log |T^{\prime }(x)|.$$
From the definition of pressure function \eqref{5} and from \eqref{eq1P3}, \eqref{gnor} and \eqref{eq6} 
\begin{equation*}
 \mathsf P_{\mathcal{B}}(T,\psi_{s})=\lim_{n\to\infty} \frac1n \log
\sum_{a_1,\ldots,a_n \in \mathcal{B}}\frac{1}{e^{g_{r}(s)\tau_{\min}S_{n}h(z)}q^{2s}_{n}}.
\end{equation*}
Define
\begin{equation*}
s^{(r)}_{n,\mathcal B} =\inf \left\{ s \geq 0:f_{n,\mathcal B}\left( s
\right) \leq 1\right\}, 
\end{equation*} and let
\begin{align*}  
s^{(r)}_{\mathcal B} &=\inf \{s\geq 0 :\mathsf P_{\mathcal{B}}(T,-g_{r}(s)\tau_{\min}h-s\log |T^{\prime }(x)|)\le 0\}, \\
s^{(r)}_{\N} &=\inf \{s\geq 0 :          \mathsf P(T,-g_{r}(s)\tau_{\min}h-s\log |T^{\prime }(x)|)                           \le 0\}.
\end{align*}
When $\mathcal{B}$ is a finite subset of $\mathbb{N}$, then $s^{(r)}_{n,\mathcal B}$ and $s^{(r)}_\mathcal B$ are the unique solutions to $f_{n,\mathcal B}\left( s
\right)= 1 $ and $    \mathsf P_{\mathcal{B}}(T,-g_{r}(s)\tau_{\min}h-s\log |T^{\prime }(x)|)=0,$ respectively (for details see \cite{WaWu08}).
If $\mathcal B=\{1,\cdots,M\}$ for any $M\in\N,$ write $s^{(r)}_{n, M}$ for $s^{(r)}_{n,\mathcal B}$ and $s^{(r)}_{M}$ for $s^{(r)}_{\mathcal B}.$ 

From Proposition \ref{l5} and since the potential  $\psi_{s}$ satisfies the tempered distortion  property we have the following result.
\begin{cor}\label{dmno} For any integer $r\ge1,$
\begin{equation*}
s^{(r)}_{\N}=\sup \{s^{(r)}_{\mathcal B}: \mathcal B \ is \ a \ finite \ subset \ of \ \N       \}.
\end{equation*}

Furthermore, the dimensional term $s^{(r)}_{\N}$ is continuous with respect to $\psi_{s},$ that is,
\begin{equation}\label{cont}
\lim_{\epsilon\to 0}\inf \{s\geq 0 :\mathsf P_{\mathcal{B}}(T,\psi_{s}+\epsilon)\le 0\}= \inf \{s\geq 0 :\mathsf P_{\mathcal{B}}(T,\psi_{s})\le 0\}.\end{equation}

\end{cor}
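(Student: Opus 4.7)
The plan is to derive both statements from Proposition \ref{l5} together with the standard fact that the pressure depends monotonically and continuously on $s$ when the potential is $\psi_s$. Before invoking Proposition \ref{l5} for $\psi_s$, I would first verify that $\psi_s$ satisfies the tempered distortion property \eqref{tdp} for each fixed $s\ge 0$: the function $h$ is tempered by hypothesis, and $\log|T'(x)|=-2\log x$ is well known to have bounded $n$th variation on cylinders of the Gauss system with $\mathrm{Var}_n(\log|T'|)\to 0$, so the nonnegative combination $\psi_s$ is tempered as well.

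For the first identity, monotonicity of the alphabet gives $\mathsf{P}_{\mathcal B}(T,\psi_s)\le\mathsf{P}(T,\psi_s)$ for every finite $\mathcal B\subset\N$ and every $s\ge 0$, hence $\{s:\mathsf{P}_{\mathcal B}(T,\psi_s)\le 0\}\supseteq\{s:\mathsf{P}(T,\psi_s)\le 0\}$, which gives $s^{(r)}_{\mathcal B}\le s^{(r)}_{\N}$ and therefore $\sup_{\mathcal B}s^{(r)}_{\mathcal B}\le s^{(r)}_{\N}$. For the reverse inequality I would argue by contradiction: suppose there exists $t$ with $\sup_{\mathcal B}s^{(r)}_{\mathcal B}<t<s^{(r)}_{\N}$. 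The definition of $s^{(r)}_{\N}$ together with strict monotonicity of $s\mapsto\mathsf{P}(T,\psi_s)$ forces $\mathsf{P}(T,\psi_t)>0$. On the other hand, for every finite $\mathcal B$ we have $t>s^{(r)}_{\mathcal B}$, and since for finite alphabets $s^{(r)}_{\mathcal B}$ is the unique zero of the (strictly decreasing, continuous) map $s\mapsto\mathsf{P}_{\mathcal B}(T,\psi_s)$, it follows that $\mathsf{P}_{\mathcal B}(T,\psi_t)\le 0$. Taking supremum over finite $\mathcal B$ and applying Proposition \ref{l5} to $\psi_t$ gives $\mathsf{P}(T,\psi_t)\le 0$, the desired contradiction.

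For the continuity assertion \eqref{cont}, the crucial observation is that adding a constant to the potential shifts the pressure by the same constant: from \eqref{5}, $S_n(\psi_s+\epsilon)=S_n\psi_s+n\epsilon$, so $\mathsf{P}_{\mathcal B}(T,\psi_s+\epsilon)=\mathsf{P}_{\mathcal B}(T,\psi_s)+\epsilon$. Therefore
\begin{equation*}
\inf\{s\ge 0:\mathsf{P}_{\mathcal B}(T,\psi_s+\epsilon)\le 0\}=\inf\{s\ge 0:\mathsf{P}_{\mathcal B}(T,\psi_s)\le -\epsilon\},
\end{equation*}
and continuity together with strict monotonicity of $s\mapsto\mathsf{P}_{\mathcal B}(T,\psi_s)$ on the relevant interval force the right-hand side to converge to $\inf\{s:\mathsf{P}_{\mathcal B}(T,\psi_s)\le 0\}$ as $\epsilon\to 0$, which is exactly \eqref{cont}.

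The main technical subtlety I expect is establishing the strict monotonicity and continuity of $s\mapsto\mathsf{P}_{\mathcal B}(T,\psi_s)$ in the relevant range; since $g_r(s)$ is controlled on $s\in(1/2,1)$ by the recursion \eqref{eq6} (and this is precisely the range that matters for the Hausdorff dimension problem), one can appeal to the analysis in \cite{WaWu08}, where the analogous monotonicity for the case $r=1$ was established, with only cosmetic modifications needed to handle the extra factor $g_r(s)$.
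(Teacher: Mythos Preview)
Your proposal is correct and follows essentially the same route the paper indicates: the paper does not give a detailed argument for this corollary but simply states that it follows ``from Proposition~\ref{l5} and since the potential $\psi_{s}$ satisfies the tempered distortion property,'' together with the remark (just before the corollary) that for finite $\mathcal B$ the quantity $s^{(r)}_{\mathcal B}$ is the unique zero of $s\mapsto \mathsf P_{\mathcal B}(T,\psi_s)$, citing \cite{WaWu08}. Your write-up is precisely the natural unpacking of that sentence; one small remark is that in your contradiction argument you do not actually need strict monotonicity of $s\mapsto\mathsf P(T,\psi_s)$ to get $\mathsf P(T,\psi_t)>0$ from $t<s^{(r)}_{\N}$, since this is immediate from the definition of $s^{(r)}_{\N}$ as an infimum.
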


From the definition of pressure function and by Corollary \ref{dmno} we have the
following result.
\begin{cor} \label{limcor}For any $M\in\N$ and $0< \epsilon <1,$
\begin{equation*}
\lim_{n\to \infty}s^{(r)}_{n,M}=s^{(r)}_M, \ \       \lim_{n\to \infty}s^{(r)}_{n,\N}=s^{(r)}_\N, \ \    \lim_{M\to \infty}s^{(r)}_M=s^{(r)}_{\N}\ {\rm and} \ \vert s^{(r)}_{n,M}-s^{(r)}_{M}\vert \leq \epsilon.
\end{equation*}
\end{cor}

\section{Proof of Theorem \protect\ref{ddd}}
The proof of Theorem \ref{ddd} is divided into two main parts: 

\noindent{\rm{(i)}} the upper estimate for $\dim_{\mathrm{H}} \mathcal{R}_{r}(\tau)$ and 

\noindent{\rm{(ii)}} the lower estimate for $\dim_{\mathrm{H}} \mathcal{R}_{r}(\tau).$

\subsection{Proof of Theorem \protect\ref{ddd}: the upper estimate.} \
Upper estimates for the Hausdorff dimension of any set are usually easy to obtain as they involve a natural covering argument. Thus for the upper bound we will find a natural covering for the set $\mathcal R_r(\tau)$.
To do this, recall that $h$ is assumed to be a positive continuous function satisfying tempered distortion property. Consequently, $$(1/n)\sum^{n}_{j=1} {\rm Var}_{j}(h)\rightarrow 0 \quad {\rm as} \ n \to \infty.$$
Thus for any fixed $\lambda>0$ there exist $N(\lambda)\in\N$ such that for any $n\geq N(\lambda)$ we have ${ \sum^{n}_{j=1}{\rm Var}_{j}(h)    \leq n \lambda}.$ Therefore, for any $x,z\in [0,1)$ with $I_{n}(x)=I_{n}(z),$  
\begin{align*}\left \vert S_{n}h(x)-S_{n}h(z) \right \vert &=  \left \vert    \sum^{n-1}_{j=0} h(T^{j}x) - \sum^{n-1}_{j=0} h(T^{j}z) \right \vert \\ &\leq \sum^{n-1}_{j=0} \left \vert    h(T^{j}x) - h(T^{j}z)  \right \vert \\ &\leq{ \sum^{n-1}_{j=0}{\rm Var}_{n-j}(h)    \leq n \lambda}.    \end{align*}
Then it follows that 
\begin{align*} 
\mathcal{R}_{r}(\tau)\subset \mathcal C_{r}(\tau)
\end{align*}
where $$\mathcal C_{r}(\tau) :=\left\{x\in[0, 1): a_{n+1}(x)\cdots a_{n+r}(x)\geq e^{\tau_{\min}S_{n}(h-\lambda)(z)} 
\text{ i. m. } n\in\mathbb{N}\right\}$$
and $z\in I_{n}(a_{1}, \cdots, a_{n}).$
Thus for the upper estimate of $\dim_{\mathrm{H}} \mathcal R_{r}(\tau)$ it is sufficient to calculate the upper estimate for
$\dim_{\mathrm{H}} \mathcal C_{r}(\tau)$ that is first we will show that
\begin{equation}  \label{drr}\dim_{\mathrm{H}} \mathcal C_{r}(\tau)\leq\inf \{s\geq 0 :\mathsf{P}(T, -g_{r}(s)\tau_{\min}(h-\lambda)-s\log |T^{\prime
}|)\le 0\},\end{equation}
by induction on $r.$

For $r=1$, the result is proved by Wang \textit{et al.} in \cite{WaWuXu16}.

Suppose that \eqref{drr} is true for $r=k.$
 We need to show that \eqref{drr} holds for $r=k+1.$
Note that
\begin{multline}
\mathcal C_{k+1}(\tau)\subseteq  \left\{x\in[0, 1): a_{n+1}(x)\cdots a_{n+k}(x)\geq e^{\tau_{\min}S_{n}(h-\lambda)(z)} 
\text{ i. m. }n\in\mathbb{N}\right\} \\ \cup  \left\{x\in[0, 1):\begin{aligned} &1\leq a_{n+1}(x)\cdots a_{n+k}(x)\leq e^{\tau_{\min}S_{n}(h-\lambda)(z)}, 
\\ & a_{n+k+1}(x)\geq \frac{e^{\tau_{\min}S_{n}(h-\lambda)(z)}}{a_{n+1}(x)\cdots a_{n+k}(x)}  \notag
\text{ i. m. }n\in\mathbb{N}  \end{aligned}   \right\}. 
\end{multline} 
Further, for any $1<\gamma \leq e$ we have
\begin{multline*}
\mathcal C_{k+1}(\tau)\subseteq  \left\{x\in[0, 1): a_{n+1}(x)\cdots a_{n+k}(x)\geq \gamma^{\tau_{\min}S_{n}(h-\lambda)(z)}
\text{ i. m. }n\in\mathbb{N}\right\}\\ \cup  \left\{x\in[0, 1):\begin{aligned}&     1 \leq a_{n+1}(x)\cdots a_{n+k}(x)\leq \gamma^{\tau_{\min}S_{n}(h-\lambda)(z)}, 
\\ & a_{n+k+1}(x)\geq \frac{e^{\tau_{\min}S_{n}(h-\lambda)(z)}}{a_{n+1}(x)\cdots a_{n+k}(x)} 
\text{ i. m. }n\in\mathbb{N}  \end{aligned}   \right\}  \\
=:\mathcal {A}(\tau) \cup \mathcal {B}(\tau).
\end{multline*}
Thus 
\begin{equation*}
\dim_{\mathrm{H}} \mathcal C_{k+1}(\tau)\leq \inf_{1< \gamma \leq         e}{\rm max} \{ \dim_{\mathrm{H}} \mathcal {A}(\tau),     \dim_{\mathrm{H}} \mathcal {B}  (\tau)    \}.
\end{equation*}
 By using induction hypothesis and since $ \gamma^{\tau_{\min}S_{n}(h-\lambda)(z)}\leq         e^{\tau_{\min}S_{n}(h-\lambda)(z)},$ 
\begin{equation*}\dim_{\mathrm{H}} \mathcal A(\tau)\leq t^{k}_{\gamma}:= \inf \{s\geq 0 :\mathsf{P}(T, -g_{k}(s) {\tau_{\min}(h-\lambda)}  \log \gamma-s\log |T^{\prime
}|)\le 0\}.      \end{equation*}
For the upper bound of $ \dim_{\mathrm{H}} \mathcal {B} (\tau)$ we proceed by finding a natural covering for this set. In terms of $\limsup$ nature of the set $\mathcal {B}(\tau)$ can be rewritten as
\begin{align*}
\mathcal {B}(\tau)&= \bigcap_{N=1}^{\infty }\bigcup_{n=N}^{\infty} \left\{x\in[0, 1):\begin{aligned}&     1 \leq a_{n+1}(x)\cdots a_{n+k}(x)\leq \gamma^{\tau_{\min}S_{n}(h-\lambda)(z)}, 
\\ & a_{n+k+1}(x)\geq \frac{e^{\tau_{\min}S_{n}(h-\lambda)(z)}}{a_{n+1}(x)\cdots a_{n+k}(x)}  \end{aligned}   \right\} 
\\& =: \bigcap_{N=1}^{\infty }\bigcup_{n=N}^{\infty}    \mathcal {B^{*}}(\tau).\end{align*}
Thus for each $n\geq N,$ the cover for $\mathcal {B^{*}}(\tau)$ will serve as a natural cover for $\mathcal {B}(\tau).$
Clearly,
\begin{align*}
\mathcal{B^{*}}(\tau)& \subseteq    \bigcup_{a_{1},\cdots, a_{n}\in\N}   \bigcup_{  {1 \leq a_{n+1}\cdots a_{n+k}\leq \gamma^{\tau_{\min}S_{n}(h-\lambda)(z)} } }  J_{n+k}(a_{1},\cdots
,a_{n+k})\end{align*}
where 
$$J_{n+k}(a_{1},\cdots
,a_{n+k})=  \bigcup_  {a_{n+k+1}\geq \frac{e^{\tau_{\min}S_{n}(h-\lambda)(z) }}{         {a_{n+1}\dots a_{n+k}}   }   }   I_{n+k+1}(a_{1},\cdots
,a_{n+k+1}).$$
By using equation \eqref{cyle},
\begin{eqnarray}
 \left\vert J_{n+k}(a_{1},\dots ,a_{n+k})\right\vert& =&\sum\limits_ {a_{n+k+1}\geq \frac{e^{\tau_{\min}S_{n}(h-\lambda)(z)}}{a_{n+1}\dots a_{n+k}}                  }           \left\vert I_{n+k+1}\left( a_{1},\dots,a_{n+k+1}\right)
\right\vert  \notag \\
&\asymp &                                                                                                                 \frac{1}{   \frac{e^{\tau_{\min}S_{n}(h-\lambda)(z)}}{a_{n+1}\cdots a_{n+k}}  q^{2}_{n+k}(a_{1},\cdots , a_{n+k})}  \notag \\ 
&\asymp & \frac{1}{        e^{\tau_{\min}S_{n}(h-\lambda)(z) }             (a_{n+1}\cdots a_{n+k})q^{2}_{n}(a_{1},\cdots , a_{n})}.\notag
\end{eqnarray}

Fixing $\delta>0$ and taking the $(s+\delta)$-volume of the cover of $B^{*}(\tau),$ we obtain
{\allowdisplaybreaks
\begin{multline*}
{} \sum\limits_    {a_{1},\cdots, a_{n}\in\N}   \sum\limits_  {1\leq a_{n+1}\cdots a_{n+k}\leq {\gamma^{\tau_{\min}S_{n}(h-\lambda)(z)}}}\\
 \qquad   \left(\frac{1}{  e^{\tau_{\min}S_{n}(h-\lambda)(z) }             (a_{n+1}\cdots a_{n+k})q^{2}_{n}(z)} \right)^{s+ \delta} \\
{}  \leq \sum\limits_    {a_{1},\cdots, a_{n}\in\N}  \sum\limits_  {1\leq a_{n+1} \cdots  {a_{n+k}}\leq {\gamma^{\tau_{\min}S_{n}(h-\lambda)(z)}}} 
  \\   \left(\frac{1}{  e^{\tau_{\min}S_{n}(h-\lambda)(z) }             (a_{n+1}\cdots a_{n+k})q^{2}_{n}(z)} \right)^{s}       e^{-\delta{\tau_{\min}S_{n}(h-\lambda)(z) }   }           \\
     \asymp   \sum\limits_    {a_{1},\cdots, a_{n}\in\N} \frac{(\log\gamma^{\tau_{\min}S_{n}(h-\lambda)(z)})^{k-1}}{(k-1)!} \gamma^{{(1-s) \tau_{\min}S_{n}(h-\lambda)(z)}} \\
  \qquad \qquad \qquad \qquad  \cdot   \left(\frac{1}{  e^{\tau_{\min}S_{n}(h-\lambda)(z) }    q^{2}_{n}(z)} \right)^{s}                e^{-\delta{\tau_{\min}S_{n}(h-\lambda)(z) }   }            \\
 \leq    \sum\limits_    {a_{1},\cdots, a_{n}\in\N} \frac{(\log e^{\tau_{\min}S_{n}(h-\lambda)(z)})^{k-1}}{(k-1)!} \gamma^{{(1-s) \tau_{\min}S_{n}(h-\lambda)(z)}}  \\
\cdot \left(\frac{1}{  e^{\tau_{\min}S_{n}(h-\lambda)(z) }    q^{2}_{n}(z)} \right)^{s}    e^{-\delta{\tau_{\min}S_{n}(h-\lambda)(z) }   }  \\
   \leq   \sum\limits_    {a_{1},\cdots, a_{n}\in\N}  \gamma^{(1-s){\tau_{\min}S_{n}(h-\lambda)(z)}} \left(\frac{1}{  e^{\tau_{\min}S_{n}(h-\lambda)(z) }    q^{2}_{n}(z)} \right)^{s}. 
\end{multline*}
}

Therefore, the $(s+\delta)$-dimensional Hausdorff measure of $\mathcal {B}(\tau)$ is
\begin{multline*}
\mathcal{H}^{s+\delta}(\mathcal{B}{(\tau)})
\leq   \liminf_{N\rightarrow \infty}\sum_{n=N}^{\infty }\sum_{a_{1},\cdots ,a_{n}\in\N}
  \sum\limits_  {1\leq a_{n+1}\cdots a_{n+k}\leq {\gamma^{\tau_{\min}S_{n}(h-\lambda)(z)}}} 
  \\
 \qquad \qquad \left(\frac{1}{  e^{\tau_{\min}S_{n}(h-\lambda)(z) }             (a_{n+1}\cdots a_{n+k})q^{2}_{n}(z)} \right)^{s+ \delta} \\  
 \leq \liminf_{N\rightarrow \infty
}\sum_{n=N}^{\infty }\sum_{a_{1},\cdots ,a_{n}\in\N} \gamma^{(1-s){\tau_{\min}S_{n}(h-\lambda)(z)}} 
\left(\frac{1}{  e^{\tau_{\min}S_{n}(h-\lambda)(z) }    q^{2}_{n}(z)} \right)^{s}.
\end{multline*}
Since $\delta>0$ is arbitrary, it follows  that 
\begin{equation*}
\dim_{\mathrm{H}} \mathcal B(\tau) \leq u^{k+1}_{\gamma} \end{equation*}
 where $u^{k+1}_{\gamma}$ is defined as 
\begin{multline*}  \inf \{s\geq 0 :\mathsf{P}(T, (1-s)\tau_{\min}(h-\lambda) \log{\gamma}-s{\tau_{\min}}(h-\lambda)-s\log |T^{\prime
}|)\le 0\}.\end{multline*}

Hence, 
$$\dim_{\mathrm{H}} \mathcal C_{k+1}(\tau)\leq \inf_{1< \gamma \leq e}{\max} \{  t^{k}_{\gamma}    , u^{k+1}_{\gamma}\}.$$
As the pressure function $P(T, \psi_{s})$ is increasing with respect to the  potential $\psi_{s}$ we know $t^{k}_{\gamma}$ is increasing and $u^{k+1}_{\gamma}$ is decreasing with respect to $\gamma$. 
Therefore the infimum is obtained at the value $\gamma$ where 
               $$-g_{k}(s) {\tau_{\min}(h-\lambda)}  \log \gamma-s\log |T^{\prime}|$$ is equal to $$(1-s)\tau_{\min}(h-\lambda) \log{\gamma}-s{\tau_{\min}}(h-\lambda)-s\log |T^{\prime}|.$$
  This  implies that     
\begin{align*}                
  &\gamma^{(1-s)\tau_{\min}(h-\lambda)}e^{-s\tau_{\min}(h-\lambda)}=\gamma^{-g_{k}(s)\tau_{\min}(h-\lambda)} \\
   \iff & -\frac{s}{(1-s)+g_{k}(s)}=  -   \log \gamma \\
 \iff & -{g_{k+1}(s)}=-g_{k}(s)\log \gamma .
\end{align*}
Hence,
\begin{equation*}  \dim_{\mathrm{H}} \mathcal C_{k+1}(\tau)\leq\inf \{s\geq 0 :\mathsf{P}(T, -g_{k+1}(s)\tau_{\min}(h-\lambda)-s\log |T^{\prime
}|)\le 0\}.\end{equation*}
Consequently, for any $r\ge1$
\begin{equation*}\dim_{\mathrm{H}} \mathcal R_{r}(\tau)\leq\inf \{s\geq 0 :\mathsf{P}(T, -g_{r}(s)\tau_{\min}(h-\lambda)-s\log |T^{\prime
}|)\le 0\}.\end{equation*}
By equation \eqref{cont} of Corollary \ref{dmno} and letting $\lambda\rightarrow 0$ we obtained the desired result.

\subsection{Proof of Theorem \protect\ref{ddd}: the lower estimate.} \
For the lower estimate of the Hausdorff dimension of $\mathcal{R}_{r}(\tau)$ we will adapt the similar method as in \cite {HuWuXu}. We first 
construct the Cantor subset $\mathcal E_{\infty}$ which sits inside the set $\mathcal{R}_{r}(\tau)$ then we distribute the measure $\mu>0$ on
$\mathcal E_{\infty}$ and obtain the Holder exponent. Lastly, we apply the mass distribution principle \cite{F_14}.

\begin{pro}[Mass Distribution Principle]
 Let $\mathcal{U}$ be a Borel subset of $\R^{d}$ and $\mu$ be a Borel measure with $\mu(\mathcal{U})>0.$ Suppose that, for some $s>0$ there exist a constant 
$c>0$ such that for any $x\in [ 0,1)$ 
\begin{equation*}
 \mu (B(x,d))\leq cd^{s},
\end{equation*}
where $B(x,d)$ denotes an open ball centred at $x$ and radius $d$. Then 
$$
\dim _{\mathrm{H}}\mathcal{U}~\geq~s.$$
\end{pro}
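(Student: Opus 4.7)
The plan is to prove the Mass Distribution Principle by the standard route of bounding the $s$-dimensional Hausdorff measure of $\mathcal{U}$ from below in terms of $\mu(\mathcal{U})$, and then translating this positivity into a lower bound for $\hdim \mathcal{U}$. Concretely, I would fix $\delta > 0$ and let $\{U_i\}_{i\geq 1}$ be an arbitrary $\delta$-cover of $\mathcal{U}$ (that is, $\mathcal{U}\subseteq \bigcup_i U_i$ with $|U_i|\leq \delta$), and show that $\sum_i |U_i|^s$ is bounded below by a positive constant independent of both the cover and $\delta$.

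The key step is the passage from an arbitrary cover to the ball hypothesis in the statement. For each non-empty $U_i$, pick any point $x_i \in U_i$; then $U_i \subseteq B(x_i, |U_i|)$, so by the hypothesis of the proposition
\begin{equation*}
\mu(U_i) \leq \mu\bigl(B(x_i, |U_i|)\bigr) \leq c\, |U_i|^s.
\end{equation*}
Summing over $i$ and using countable subadditivity of $\mu$ together with $\mathcal{U}\subseteq \bigcup_i U_i$,
\begin{equation*}
0 \,<\, \mu(\mathcal{U}) \,\leq\, \sum_i \mu(U_i) \,\leq\, c \sum_i |U_i|^s,
\end{equation*}
so $\sum_i |U_i|^s \geq \mu(\mathcal{U})/c > 0$.

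Taking the infimum over all $\delta$-covers gives $\mathcal{H}^s_\delta(\mathcal{U}) \geq \mu(\mathcal{U})/c$, and letting $\delta \to 0$ yields $\mathcal{H}^s(\mathcal{U}) \geq \mu(\mathcal{U})/c > 0$. Since the $s$-dimensional Hausdorff measure of $\mathcal{U}$ is strictly positive, the definition of Hausdorff dimension forces $\hdim \mathcal{U} \geq s$, which is precisely the desired conclusion.

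There is no serious obstacle here, since this is a textbook result \cite{F_14}: the only subtlety worth flagging is the innocuous passage from an arbitrary covering set $U_i$ to the ball $B(x_i,|U_i|)$, and the harmless bookkeeping needed to discard empty sets and singletons (which contribute nothing to $\sum_i |U_i|^s$). The real conceptual content of the proposition is not in its proof but in its utility: it reduces the task of lower-bounding $\hdim \mathcal{R}_r(\tau)$ to the construction of a suitable Cantor-type subset $\mathcal{E}_\infty\subseteq \mathcal{R}_r(\tau)$, a measure $\mu$ supported on $\mathcal{E}_\infty$, and a pointwise Hölder bound of the form $\mu(B(x,d)) \ll d^s$ with $s = s^{(r)}_{\mathbb{N}}$, which is the strategy the paper then pursues.
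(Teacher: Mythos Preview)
Your proof is correct and is precisely the standard argument for the Mass Distribution Principle. The paper itself does not prove this proposition; it simply states it and cites Falconer \cite{F_14}, so there is nothing to compare against beyond noting that your argument is the textbook one.
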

\noindent \textbf{Cantor subset:}

Fix $\frac{1}{2}<s<s^{(r)}_{\N}$ and chose ${1\leq \gamma_{0}\leq \gamma_{1}\leq\cdots\leq\gamma_{r-2}\leq e}$ in a way such that 
\begin{equation}\label{sun1}
\log\gamma_{i}=\frac{ g_{r}(s)(1-s)^{i}   }{    s^{i+1} }  \text { for all } 0\leq i \leq r-2. \end{equation}
\goodbreak
Moreover we have the following lemma which we will prove by induction on r .
\begin{lem} For any $r\geq 1,$
\begin{equation}\label{recrel2}g_{r}(s)=\frac{ s^{r} (2s-1)   }{    s^{r}-(1-s)^{r} }\end{equation}
satisfies the recursive relation defined in \eqref {eq6}.
\end{lem}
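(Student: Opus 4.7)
The plan is a straightforward induction on $r$, where the only nontrivial content is an algebraic simplification of a compound fraction.

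For the base case $r=1$, substituting $r=1$ into the proposed closed form gives $\frac{s(2s-1)}{s-(1-s)} = \frac{s(2s-1)}{2s-1} = s$, which agrees with $g_1(s) = s$ from \eqref{eq6}. This disposes of the base case immediately.

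For the inductive step, assume the closed form $g_{r-1}(s) = \frac{s^{r-1}(2s-1)}{s^{r-1}-(1-s)^{r-1}}$ holds. I would substitute this directly into the defining recursion $g_r(s) = \frac{s\, g_{r-1}(s)}{1-s+g_{r-1}(s)}$, and then clear the inner fraction by multiplying numerator and denominator by $s^{r-1}-(1-s)^{r-1}$. The numerator becomes $s^r(2s-1)$ almost at once. The denominator becomes
\begin{equation*}
(1-s)\bigl(s^{r-1}-(1-s)^{r-1}\bigr) + s^{r-1}(2s-1),
\end{equation*}
and the key step is to observe that the $s^{r-1}$ terms group as $s^{r-1}\bigl[(1-s)+(2s-1)\bigr] = s^{r-1}\cdot s = s^r$, while the remaining term contributes $-(1-s)^r$. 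This produces exactly $s^r - (1-s)^r$, so the ratio collapses to $\frac{s^r(2s-1)}{s^r-(1-s)^r}$, matching the claimed form \eqref{recrel2} at level $r$.

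I do not foresee any real obstacle; the only thing worth being careful about is the cancellation $(1-s)+(2s-1)=s$ that makes the denominator telescope cleanly, and the harmless restriction $s \neq 1/2$ (and $s\neq 0,1$) needed for the closed form to be well defined, which is compatible with the range $\tfrac12 < s < s^{(r)}_{\mathbb{N}}$ already fixed in the construction of the Cantor subset. No appeal to the pressure function or to any dimension-theoretic input is required; the lemma is purely algebraic.
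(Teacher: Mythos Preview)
Your proof is correct and follows essentially the same approach as the paper: induction on $r$, with the base case $r=1$ verified directly and the inductive step handled by substituting the closed form into the recursion and simplifying the denominator via the cancellation $(1-s)+(2s-1)=s$. The only difference is cosmetic---you index the step as $r-1\to r$ while the paper uses $k\to k+1$, and your grouping of the $s^{r-1}$ terms is slightly more streamlined than the paper's fully expanded intermediate expression.
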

\begin{proof}
When $r=1,$ it is clear from \eqref{eq6} that  $$g_{1}(s)=s = \frac{ s (2s-1)   }{    s-(1-s) }.$$

Suppose \eqref{recrel2} is true for $r=k,$ then for $r=k+1$  
\begin{align*}
g_{k+1}(s)&=\frac{sg_{k}(s)}{1-s+g_{k}(s)} \text{  by \eqref{eq6} } \\
&=\frac{ s   \frac{ s^{k} (2s-1)   }{    s^{k}-(1-s)^{k} }   }{ 1-s+ \frac{ s^{k} (2s-1)   }{    s^{k}-(1-s)^{k} }   } \text { ( by induction hypothesis )}\\
&=\frac{ s^{k+1} (2s-1)   }{  s^{k} -(1-s)^{k}-s^{k+1}+s(1-s)^{k}+(2s-1)s^{k}}\\
&=\frac{ s^{k+1} (2s-1)   }{  s^{k+1} -(1-s)^{k}(1-s)}=\frac{s^{k+1}(2s-1) }{s^{k+1}-(s-1)^{k+1}}.
\end{align*}
Therefore \eqref{recrel2} is true for $r=k+1.$

\end{proof}

Thus by using \eqref{sun1} and  \eqref{recrel2} it is easy to check that the following equality holds.
\begin{equation}\label{eqpro}
\begin{aligned}
\log\gamma^{-s}_{0}=\log(\gamma^{1-s}_{0}{(\gamma_{0}\gamma_{1})}^{-s})=\cdots&=        \log((\gamma_{0}\cdots\gamma_{r-3})^{1-s}          (\gamma_{0}\cdots\gamma_{r-2})^{-s} )\\                                                               &=\log(\gamma_{0}\cdots\gamma_{r-2})^{1-s}-s ={-g_{r}(s)}.
\end{aligned}
\end{equation}

Further, let $\epsilon>0$ and $M \in \N.$ 
Fix an irrational $z_{0}$ and an integer $t_{0}$ such that for any $z\in I_{n}(z_{0})$ with $n\geq t_{0}$ we have
\begin{equation*}
\tau(z)\leq \min \{ \tau_{\min}(1+\epsilon), \tau_{\min}+\epsilon \}.
\end{equation*}
Next define  two integer sequences $\{t_{j}\}_{j\geq1}$ and  $\{m_{j}\}_{j\geq1}$ recursively where $\{m_{j}\}_{j\geq1}$ is defined to be a largely sparse integer sequence tending to infinity.
For each $j\geq1,$ define $t_{j}=t_{0}+j$  and set $n_{j}=(n_{j-1}+(r-1))+t_{j}+m_{j}+1.$

Now we construct the Cantor subset $\mathcal E_{\infty}$ level by level. We start by defining the zero level. 

\noindent \textbf{Level 0.}
Let $n_{0}+(r-1)\geq t_{2}.$
Define $$\nu^{(0,r-1)}=(a_{1}(z_{0}), a_{2}(z_{0}), \cdots,   a_{n_{0}+(r-1)}(z_{0})).$$ Then the zero level $\mathcal E_{0}$ of the Cantor set $\mathcal E_{\infty}$ is defined as 
\begin{equation*}
\mathcal E_{0}:=\mathcal F_{0}=\{I_{n_{0}+(r-1)}(\nu^{(0,r-1)}) \}.
\end{equation*}

\noindent \textbf{Level 1.}
Note that $n_{1}=(n_{0}+(r-1))+t_{1}+m_{1}+1.$ Let us define the collection of basic cylinders of order $n_{1}-1:$
\begin{equation*}
\mathcal F_{1}=\{I_{n_{1}-1}(\nu^{(0,r-1)}, \nu^{(0,r-1)}|_{t_{1}}, b^{(1)}_{1},\cdots,b^{(1)}_{m_{1}}  ) : 1\leq b_{1}^{(1)},\cdots,b^{(1)}_{m_{1}}      \leq M    \}.
\end{equation*}
For each $I_{n_{1}-1}(w^{(1)}=(\nu^{(0,r-1)}, \nu^{(0,r-1)}|_{t_{1}}, b^{(1)}_{1},\cdots,b^{(1)}_{m_{1}}))\in \mathcal F_{1}$ define the collection of sub-cylinders of order $n_{1}$:
\begin{multline}\label{aysbeh}
\mathcal E_{1,0}(w^{(1)}):= \{ I_{n_{1}}(\nu^{(1,0)})=I_{n_{1}}(w^{(1)},a_{n_{1}}): \\  \gamma_{o}^{\tau({z_{1}})S_{n_{1}-(n_{0}+(r-1))-1}h(z_{1})} \leq a_{n_{1}}<2\gamma_{0}^{\tau({z_{1}})S_{n_{1}-(n_{0}+(r-1))-1}h(z_{1})}  \}
\end{multline}
where $z_{1} \in I_{n_{1}-(n_{0}+(r-1))-1} (\nu^{(0,r-1)}|_{t_{1}}, b^{(1)}_{1},\cdots,b^{(1)}_{m_{1}}).$

Let  $ I_{n_{1}} = I_{n_{1}}(\nu^{(0,r-1)}, \nu^{(0,r-1)}|_{t_{1}}, b^{(1)}_{1},\cdots,b^{(1)}_{m_{1}}, a_{n_{1}}  )  \in  \mathcal E_{1,0}(w^{(1)}).  $      The choice of $z_{1}$ indicates that for any $x\in I_{n_{1}}      $ the continued fraction representations of $z_{1}$ and $x$ share prefixes up to $t_{1}$th partial quotients. Hence $\tau(z_{1})$ is close to $\tau(x)$ by the continuity of  $\tau.$ Further, it can be easily checked that $S_{n_{1}-(n_{0}+(r-1))-1}h(z_{1})~\sim~S_{n_{1}-1} h(x),$ ( here `$\sim$' denotes the asymptotic equality of two functions). Consequently, 
\begin{align*}
&&{\tau({z_{1}})S_{n_{1}-(n_{0}+(r-1))-1}h(z_{1})} \sim \tau(x)S_{n_{1}-1} h(x)\\ 
&\implies&{\tau({z_{1}})S_{n_{1}-(n_{0}+(r-1))-1}h(z_{1})} \log\gamma_{0}\sim \tau(x)S_{n_{1}-1} h(x)\log\gamma_{0}\\
 &\implies& \log\gamma^{{\tau({z_{1}})S_{n_{1}-(n_{0}+(r-1))-1}h(z_{1})}}_{0}\sim \log\gamma^{\tau(x)S_{n_{1}-1} h(x)}_{0}\\
&\implies&   \log a_{n_{1}}\sim \log\gamma^{\tau(x)S_{n_{1}-1} h(x)}_{0}  \text { from \eqref{aysbeh} }. \\
\end{align*}
Thus,
$a_{n_{1}}(x)\sim \gamma_{o}^{\tau(x)S_{n_{1}-1}h(x)}.$

Next for each $I_{n_{1}}(\nu^{(1,0)})\in \mathcal E_{1,0}(w^{(1)})$ define 
\begin{multline*}
\mathcal E_{1,1}(\nu^{(1,0)}):= \{ I_{n_{1}+1}(\nu^{(1,1)}) =I_{n_{1}+1}(\nu^{(1,0)},a_{n_{1}+1}): \\ \gamma_{1}^{\tau({z_{1}})S_{n_{1}-(n_{0}+(r-1))-1}h(z_{1})} \leq a_{n_{1}+1} <2\gamma_{1}^{\tau({z_{1}})S_{n_{1}-(n_{0}+(r-1))-1}h(z_{1})}    \}.
\end{multline*}

Continuing in this way for each $I_{n_{1}+(r-3)}(\nu^{(1,r-3)})\in \mathcal E_{1,r-3}(\nu^{(1,r-4)})$ collect a family of sub-cylinder of order $n_{{1}+(r-2)}$:
\begin{multline*}
\mathcal E_{1,r-2}(\nu^{(1,r-3)}):= \{ I_{n_{1}+(r-2)}(\nu^{(1,r-2)}) =I_{n_{1}+(r-2)}(\nu^{(1,r-3)},a_{n_{1}+(r-2)}): 
\\  \gamma_{r-2}^{\tau({z_{1}})S_{n_{1}-(n_{0}+(r-1))-1}h(z_{1})} \leq a_{n_{1}+(r-2)} <2\gamma_{r-2}^{\tau({z_{1}})S_{n_{1}-(n_{0}+(r-1))-1}h(z_{1})} \}.
\end{multline*}

Further for each $I_{n_{1}+(r-2)}(\nu^{(1,r-2)})\in \mathcal E_{1,r-2}(\nu^{(1,r-3)})$ collect a family of sub-cylinders of order $n_{{1}+(r-1)}$:
\begin{multline*}
\mathcal E_{1,r-1}(\nu^{(1,r-2)}):= \{ I_{n_{1}+(r-1)}(\nu^{(1,r-1)})=I_ {n_{1}+(r-1)}( \nu^{(1,r-2)},a_{n_{1}+(r-1)}): 
\\  \left ( \frac{e}{\gamma_{0}\cdots\gamma_{r-2}}\right )          ^{\tau({z_{1}})S_{n_{1}-(n_{0}+(r-1))-1}h(z_{1})}  \leq   a_{n_{1}+(r-1)} 
\\<  2\left ( \frac{e}{\gamma_{0}\cdots\gamma_{r-2}}\right )^{\tau({z_{1}})S_{n_{1}-(n_{0}+(r-1))-1}h(z_{1})} \}. 
\end{multline*}

Then the first level of the Cantor set $\mathcal E_{\infty}$ is defined as 
\begin{multline*}
\mathcal E_{1,r-1}= \{   I_{n_{1}+(r-1)} (\nu^{(1,r-1)}) \in    \mathcal E_{1,r-1}(\nu^{(1,r-2)}): \\ I_{n_{1}+i}(\nu^{(1,i)})   \in      \mathcal E_{1,i}(\nu^{(1,i-1)}) \ {\rm for}\ 1\leq i\leq r-2 ; \\      I_{n_{1}}(\nu^{(1,0)} ) \in \mathcal E_{1,0}(w^{(1)}); I_{n_{1}-1}(w^{(1)})\in \mathcal F_{1}   \}.
\end{multline*}

\noindent \textbf{Level j.}

Suppose that $\mathcal E_{j-1,r-1},$ i.e., the $(j-1)$th level has been constructed. The set $\mathcal E_{j-1,r-1}$  consist of cylinders all of which are of order $n_{{j-1}}+(r-1).$
 Recall that $n_{j}~=(n_{j-1}+(r-1))+t_{j}+m_{j}+1.$ For each basic cylinder $I_{n_{j-1}+(r-1)} (\nu^{(j-1,r-1)}) \in    \mathcal E_{j-1,r-1}$ define the collections of sub-cylinders of order $n_{j}-1:$
 \begin{align*}
\mathcal F_{j}(I_{n_{j-1}+(r-1)} (\nu^{(j-1,r-1)}))=\{I_{n_{j}-1}&(\nu^{(j-1,r-1)}, \nu^{(j-1,r-1)}|_{t_{j}}, b^{(j)}_{1},\cdots,b^{(j)}_{m_{j}}): \\ &1\leq b^{(j)}_{1},\cdots, b^{(j)}_{m_{j}}\leq M\}
\end{align*} 
  and let
 \begin{equation*}
 \mathcal F_{j}= \bigcup_{I_{n_{j-1}+(r-1)}\in     \mathcal E_{j-1,r-1}} \mathcal F_{j}(I_{n_{j-1}+(r-1)} (\nu^{(j-1,r-1)})). 
 \end{equation*}
 
 Following the same process as for \textbf{Level 1}, for each
 $I_{n_{j}-1}(w^{(j)})\in \mathcal F_{j}$ define the collection of sub-cylinders:
\begin{multline*}
\mathcal E_{j,0}(w^{(j)}):=  \{ I_{n_{j}}(\nu^{(j,0)})=I_{n_{j}}(w^{(j)},a_{n_{j}}): \\ \gamma_{0}^{\tau({z_{j}})S_{n_{j}-(n_{j-1}+(r-1))-1}h(z_{j})} \leq a_{n_{j}}<2\gamma_{0}^{\tau({z_{j}})S_{n_{j}-(n_{j-1}+(r-1))-1}h(z_{j})}  \}
\end{multline*}
where $z_{j} \in I_{n_{j}-(n_{j-1}+(r-1))-1} (\nu^{(j-1,r-1)}|_{t_{j}}, b^{(j)}_{1},\cdots,b^{(j)}_{m_{j}}).$
 
Next for each $I_{n_{j}}(\nu^{(j,0)})\in \mathcal E_{j,0}(w^{(j)}) $ define
\begin{multline*}
\mathcal E_{j,1}(\nu^{(j,0)}):=  \{ I_{n_{j}+1}(\nu^{(j,1)})=I_{n_{j}+1}(\nu^{(j,0)},a_{n_{j}+1}): \\ \gamma_{1}^{\tau({z_{j}})S_{n_{j}-(n_{j-1}+(r-1))-1}h(z_{j})}  \leq a_{n_{j}+1} <2\gamma_{1}^{\tau({z_{j}})S_{n_{j}-(n_{j-1}+(r-1))-1}h(z_{j})}    \}.
\end{multline*}
 Similarly for each $I_{n_{j}+i-1}(\nu^{(j,i-1)})\in \mathcal E_{j,i-1}(\nu^{(j,i-2)})$ with $2\leq i \leq r-2$ we
collect a family of the sub-cylinders of order $n_{{j}+i}$,
 \begin{multline*}
\mathcal E_{j,i}(\nu^{(j,i-1)}):= \{ I_{n_{j}+i}(\nu^{(j,i)}) =I_{n_{j}+i}(\nu^{(j,i-1)},a_{n_{j}+i}): 
\\  \gamma_{i}^{\tau({z_{j}})S_{n_{j}-(n_{j-1}+(r-1))-1}h(z_{j})} \leq a_{n_{j}+i} <2\gamma_{i}^{\tau({z_{j}})S_{n_{j}-(n_{j-1}+(r-1))-1}h(z_{j})} \}.
\end{multline*}
  Continuing in this way for each $I_{n_{j}+r-2}(\nu^{(j,r-2)})\in \mathcal E_{j,r-2}(\nu^{(j,r-3)})$ we define 
 \begin{multline*}
\mathcal E_{j,r-1}(\nu^{(j,r-2)}):= \{I_{n_{j}+(r-1)}(\nu^{(j,r-1)})=I_{n_{j}+(r-1)}(\nu^{(j,r-2)},a_{n_{j}+(r-1)})): \\
 \left ( \frac{e}{\gamma_{0}\gamma_{1}\cdots\gamma_{r-2}}\right )^{\tau({z_{j}})S_{n_{j}-(n_{j-1}+(r-1))-1}h(z_{j})}\\   \leq a_{n_{j}+(r-1)}<  2\left ( \frac{e}{\gamma_{0}\gamma_{1}\cdots\gamma_{r-2}}\right )^{\tau({z_{j}})S_{n_{j}-(n_{j-1}+(r-1))-1}h(z_{j})}    \}.
\end{multline*}
Then the $j$th level of the Cantor set $\mathcal E_{\infty}$ is defined as 
\begin{multline*}
\mathcal E_{j,r-1}= \{   I_{n_{j}+(r-1)} (\nu^{(j,r-1)}) \in    \mathcal E_{j,r-1}(\nu^{(j,r-2)}):  \\ I_{n_{j}+i}(\nu^{(j,i)})   \in      \mathcal E_{j,i}(\nu^{(j,i-1)}) \ {\rm for}\ 1\leq i\leq r-2 ; \\     I_{n_{j}}(\nu^{(j,0)} ) \in \mathcal E_{j,0}(w^{(j)}); I_{n_{j}-1}(w^{(j)})\in \mathcal F_{j}   \}.
\end{multline*}
 
 Then the Cantor set is defined as 
 \begin{equation*}
  \mathcal E_{\infty}=\bigcap_{j=1}^{\infty }\bigcup_{I_{n_{j}+(r-1)}(\nu^{(j,r-1)}) \in \mathcal E_{j,r-1}}I_{n_{j}+(r-1)}(\nu^{(j,r-1)}).
 \end{equation*}
 
 By the same arguments as discussed earlier in defining Level 1, that is, by the continuity of~$\tau$ and since $z_{j}$ and $x$ share common prefixes up to $t_{j}$th partial quotients, we have
 \begin{equation}\label{sub}\lim_{j\to\infty}\tau(z_{j})=  \tau(x) \ {\rm and} \ \lim_{j\to\infty}\frac{S_{n_{j}-({n_{j-1}+(r-1))} -1}h(z_{j})}{S_{n_{j}-1}h(x)} =1.        \end{equation}
 Therefore $  \mathcal E_{\infty} $ is contained in $\mathcal R_{r}(\tau),$ as for showing this it is sufficient to show that \eqref{sub} holds.  
 
 In order to better understand the structure of $\mathcal E_{\infty}$ we will utilize the idea of symbolic space. If  the continued fraction expansion of a point $x \in \mathcal E_{\infty}$ is represented by $[\nu_{1},\nu_{2},\cdots,\nu_{n},\cdots]$  then the sequence $(\nu_{1},\nu_{2},\cdots,\nu_{n},\cdots)$ is known as \textit{admissible sequence} and      $\nu=(\nu_{1},\nu_{2},\cdots,\nu_{n})$ is called an \textit{admissible block} for any $n\geq1.$ If $\nu$ is an admissible block only than $I_{n}(\nu) \cap \mathcal E_{\infty}\neq \emptyset,$ and such basic cylinders $I_{n}(\nu)$ are known as admissible cylinders.
 
 For any $n\geq1,$ denote by `$\mathcal D_{n}$' the set of strings defined as
 \begin{equation*}
 \mathcal D_{n}=\{(\nu_{1},\nu_{2},\cdots,\nu_{n})\in {\mathbb N}^{n}: \nu=(\nu_{1},\nu_{2},\cdots,\nu_{n}) \text{ is an admissible block }\}.
  \end{equation*}
  We will define $\mathcal D_{n}$ for different cases, according to the limitations on the partial quotients defined in the construction of $\mathcal E_{\infty}.$
 
 \noindent Let $l_{1}=n_{1}-(n_{0}+(r-1))-1=t_{1}+m_{1}.$
 \begin{itemize}
 \item [\rm(1a)] When $1\leq n \leq (n_{0}+(r-1)),$
 $$\mathcal D_{n}=\{ \nu^{(0,r-1)}=(a_{1}(z_{0}), a_{2}(z_{0}), \cdots,   a_{n_{0}+(r-1)}(z_{0}))\}.$$ 
 
 \item [\rm(1b)] When $(n_{0}+(r-1))<n\leq (n_{0}+(r-1))+t_{1},$
  $$\mathcal D_{n}=\{ (\nu^{(0,r-1)}, \nu^{(0,r-1)}|_{{n-(n_{0}+(r-1))}}   )\}.$$ 
   
  \item [\rm(1c)] When $(n_{0}+(r-1))+t_{1}<n< n_{1},$
    \begin{multline*}\mathcal D_{n}=\{ \nu=(\nu^{(0,r-1)}, \nu^{(0,r-1)}|_{t_{1}},  \nu_{(n_{0}+(r-1))+t_{1}+1)},\dots,\nu_{n} ):\\ 1\leq\nu_{u}\leq M,\ (n_{0}+(r-1))+t_{1}<u\leq n \}.
  \end{multline*}  
    
   \item [\rm(1d)] When $n=n_{1},$  
\begin{multline*} \mathcal D_{n}=  \{ \nu=(\nu^{(0,r-1)}, \nu^{(0,r-1)}|_{t_{1}}, \nu_{(n_{0}+(r-1))+t_{1}+1)},\dots,\nu_{n_{1}-1}, \nu_{n_{1}} ): \\ \gamma_{0}^{\tau({z_{1}})S_{l_{1}}h(z_{1})} \leq \nu_{n_{1}}<2\gamma_{0}^{\tau({z_{1}})S_{l_{1}}h(z_{1})}  \\
 {\rm and} \  1\leq\nu_{u}\leq M,\ {\rm for}\ (n_{0}+(r-1))+t_{1}<u< n_{1}   \},
  \end{multline*}
   where 
 $z_{1} \in I_{l_{1} {(\nu^{(0,r-1)}|_{t_{1}},  \nu_{(n_{0}+(r-1))+t_{1}+1} ,\cdots, \nu_{n_{1}-1} )}}.$   
 
 \item [\rm(1e)] When $n=n_{1}+i$ where $1\leq i \leq r-2,$  
  \begin{multline*} \mathcal D_{n}=  \{ \nu=(\nu^{(0,r-1)}, \nu^{(0,r-1)}|_{t_{1}}, \nu_{(n_{0}+(r-1))+t_{1}+1)},\dots, \nu_{n_{1}+i} ):\\ \gamma_{i}^{\tau({z_{1}})S_{l_{1}}h(z_{1})} \leq \nu_{n_{1}+i}<2 \gamma_{i}^{\tau({z_{1}})S_{l_{1}}h(z_{1})} \ {\rm where } \ 1\leq i \leq r-2 , \\
     \gamma_{0}^{\tau({z_{1}})S_{l_{1}}h(z_{1})} \leq \nu_{n_{1}}<2 \gamma_{0}^{\tau({z_{1}})S_{l_{1}}h(z_{1})}    \\ {\rm and} \  1\leq\nu_{u}\leq M,\ {\rm for}\ (n_{0}+(r-1))+t_{1}<u<n_{1}  \}.  
  \end{multline*}
  
  \item [\rm(1f)] When $n=n_{1}+(r-1),$  
    \begin{multline*} \mathcal D_{n}=  \{ \nu=(\nu^{(0,r-1)}, \nu^{(0,r-1)}|_{t_{1}},  \nu_{(n_{0}+(r-1))+t_{1}+1)},\dots, \nu_{n_{1}} ): \\   \left ( \frac{e}{\gamma_{0}\gamma_{1}\cdots\gamma_{r-2}}\right )          ^{\tau({z_{1}})S_{{l_{1}}}h(z_{1})}  \leq  \nu_{n_{1}+(r-1)} \\  \qquad \qquad \qquad \qquad \qquad \qquad <   2\left ( \frac{e}{\gamma_{0}\gamma_{1}\cdots\gamma_{r-2}}\right )^{\tau({z_{1}})S_{l_{1}}h(z_{1})},  
 \\ \gamma_{i}^{\tau({z_{1}})S_{l_{1}}h(z_{1})} \leq \nu_{n_{1}+i}<2 \gamma_{i}^{\tau({z_{1}})S_{l_{1}}h(z_{1})}  {\rm where} \  1\leq i \leq r-2 , 
  \\   \gamma_{0}^{\tau({z_{1}})S_{l_{1}}f(z_{1})} \leq \nu_{n_{1}}<2 \gamma_{0}^{\tau({z_{1}})S_{l_{1}}h(z_{1})} \\ {\rm and} \ 1\leq\nu_{u}\leq M,\ {\rm for}\ (n_{0}+(r-1))+t_{1}<u< n_{1}  \}.  
  \end{multline*}
  
   \end{itemize}
 
 Next we define $\mathcal D_{n}$ inductively. For this we suppose that  $\mathcal D_{n_{j-1}+(r-1)}$ has been given and  for each $j\geq1,$ write $l_{j}=n_{j}-(n_{j-1}+(r-1))-1=t_{j}+m_{j}.$ 
  \begin{itemize}
   \item [\rm(2a)] When $(n_{j-1}+(r-1))<n\leq (n_{j-1}+(r-1))+t_{j},$
  $$\mathcal D_{n}=\{ \nu=(\nu^{(j-1,r-1)}, \nu^{(j-1,r-1)}|_{{n-(n_{j-1}+(r-1))}}   )\}.$$ 
   
  \item [\rm(2b)] When $(n_{j-1}+(r-1))+t_{j}<n< n_{j},$
 \begin{multline*}\mathcal D_{n}=\{  \nu=(\nu^{(j-1,r-1)}, \nu^{(j-1,r-1)}|_{t_{j}},  \nu_{(n_{j-1}+(r-1))+t_{j}+1},\dots,\nu_{n} ): \\ \nu^{(j-1,r-1)} \in \mathcal D_{n_{j-1}+(r-1)},  
\\    1\leq\nu_{u}\leq M,\ (n_{j-1}+(r-1))+t_{j}<u\leq n \} .
  \end{multline*}   
  
 \item [\rm(2c)] When $n= n_{j},$
  \begin{multline*} \mathcal D_{n}=  \{ \nu=(\nu^{(j-1,r-1)}, \nu^{(j-1,r-1)}|_{t_{j}},  \nu_{(n_{j-1}+(r-1))+t_{1}+1)},\dots, \nu_{n_{j}} ): \\ \gamma_{0}^{\tau({z_{j}})S_{l_{j}}h(z_{j})} \leq \nu_{n_{j}}<2\gamma_{0}^{\tau({z_{j}})S_{l_{j}}h(z_{j})}  \\ 
 {\rm and} \  1\leq\nu_{u}\leq M,\ {\rm for}\ (n_{j-1}+(r-1))+t_{j}<u<n_{j}  \}.  
  \end{multline*}
    where 
 $z_{j} \in I_{l_{j}} (\nu^{(j-1,r-1)}|_{t_{j}},  \nu_{(n_{j-1}+(r-1))+t_{j}+1} ,\cdots, \nu_{n_{j}-1}   ).$   
 
 \item [\rm(2d)] When $n=n_{j}+i$ where $1\leq i \leq r-2,$  
   \begin{multline*} \mathcal D_{n}=  \{ \nu=(\nu^{(j-1,r-1)}, \nu^{(j-1,r-1)}|_{t_{j}}, \nu_{(n_{j-1}+(r-1))+t_{j}+1)},\dots, \nu_{n_{j}+i} ): \\  \gamma_{i}^{\tau({z_{j}})S_{l_{j}}h(z_{j})} \leq \nu_{n_{j}+i}<2 \gamma_{i}^{\tau({z_{j}})S_{l_{j}}h(z_{j})}  {\rm where} \  1\leq i \leq r-2 , 
  \\   \gamma_{0}^{\tau({z_{j}})S_{l_{j}}h(z_{j})} \leq \nu_{n_{j}}<2 \gamma_{0}^{\tau({z_{j}})S_{l_{j}}h(z_{j})}    \\ {\rm and} \  1\leq\nu_{u}\leq M,\ {\rm for}\ (n_{j-1}+(r-1))+t_{j}<u< n_{j}  \}.  
  \end{multline*}  
  
  \item [\rm(2e)] When $n=n_{j}+(r-1),$  
  \begin{multline*} \mathcal D_{n}=  \{ \nu=(\nu^{(j-1,r-1)}, \nu^{(j-1,r-1)}|_{t_{j}},  \nu_{(n_{j-1}+(r-1))+t_{j}+1)},\dots, \nu_{n_{j}} ): \\   \left ( \frac{e}{\gamma_{0}\gamma_{1}\cdots\gamma_{r-2}}\right )          ^{\tau({z_{j}})S_{{l_{j}}}h(z_{j})}  \leq  \nu_{n_{j}+(r-1)}\\
  \qquad \qquad \qquad \qquad \qquad \qquad <   2\left ( \frac{e}{\gamma_{0}\gamma_{1}\cdots\gamma_{r-2}}\right )^{\tau({z_{j}})S_{l_{j}}h(z_{j})},  
 \\ \gamma_{i}^{\tau({z_{j}})S_{l_{j}}h(z_{j})} \leq \nu_{n_{j}+i}<2 \gamma_{i}^{\tau({z_{j}})S_{l_{j}}h(z_{j})} \ {\rm where} \  1\leq i \leq r-2 , 
  \\   \gamma_{0}^{\tau({z_{j}})S_{l_{j}}h(z_{j})} \leq \nu_{n_{j}}<2 \gamma_{0}^{\tau({z_{j}})S_{l_{j}}h(z_{j})} \\ {\rm and} \  1\leq\nu_{u}\leq M,\ {\rm for}\ (n_{j-1}+(r-1))+t_{j}<u< n_{j} \}.  
  \end{multline*}
  \end{itemize} 
 
 \noindent {\bf Fundamental cylinders:}
 For each $\nu=(\nu_{1}, \cdots,\nu_{n}) \in \mathcal D_{n},$ we define a refinement $J_{n}$ of $I_{n}$
 as the union of its sub-cylinders with nonempty intersection with $\mathcal E_{\infty}.$

\begin{itemize}
 \item [\rm(3a)] For $(n_{j-1}+(r-1))+t_{j}<n< n_{j}+1,$ define  
 \begin{equation}\label{eq3a}
 J_{n}(\nu)=\bigcup_{1\leq \nu_{n+1}\leq
M}                        I_{n+1}(\nu_{1},\dots ,\nu_{n},\nu_{n+1}). 
 \end{equation}
 \item [\rm(3b)] For $n=n_{j}-1,$ define
  \begin{equation}\label{eq3b}
 J_{n_{j}-1}(\nu)=\bigcup_{\gamma^{\tau (z_{j})S_{l_{j}}h(z_{j})}_{0}\leq \nu_{n_{j}}<
  2       \gamma^{\tau (z_{j})S_{l_{j}}h(z_{j})}_{0}                     }I_{n_{j}}(\nu_{1},\dots ,\nu_{n_{j}-1},\nu_{n_{j}}), 
 \end{equation}
  where 
 $z_{j} \in I_{l_{j}} (\nu^{(j-1,r-1)}|_{t_{j}},  \nu_{(n_{j-1}+(r-1))+t_{j}+1} ,\cdots, \nu_{n_{j}-1}   ).$   

  \item [\rm(3c)] For $n=n_{j}+i-1$ with $1\leq i \leq r-2,$ define
  \begin{equation}\label{eq3c}
 J_{n_{j}+i-1}(\nu)
 \bigcup_{\gamma^{\tau (z_{j})S_{l_{j}}h(z_{j})}_{i}\leq \nu_{n_{j}+i} \leq
  2       \gamma^{\tau (z_{j})S_{l_{j}}h(z_{j})}_{i}}I_{n_{j}+i} (\nu_{1},\dots, \nu_{n_{j+i}}). 
 \end{equation}
 \item [\rm(3d)] For $n=n_{j}+(r-2),$ define
 \begin{equation}\label{eq3d}
 \begin{aligned} &J_{n_{j}+(r-2)}(\nu)\\
 &=\bigcup_{ (\frac{e}{\gamma_{0}\cdots \gamma_{r-2}})^{\tau (z_{j})S_{l_{j}}h(z_{j})}\leq \nu_{n_{j}+(r-1)} \leq
  2    (\frac{e}{   \gamma_{0}\cdots \gamma_{r-2}})^{\tau (z_{j})S_{l_{j}}h(z_{j})}}I_{n_{j}+(r-1)}(w) ,
   \end{aligned}
   \end{equation}
 where $w=(\nu_{1},\dots, \nu_{n_{j}+(r-1)}).$
  \item [\rm(3e)]If $n_{j}+(r-1)\leq n \leq (n_{j}+(r-1))+t_{j+1},$ then by construction of $\mathcal E_{\infty},$ 
 \begin{equation}\label{eq3e}
 J_{n}(\nu)=I_{n_{j}+(r-1)+t_{j+1}}(\nu^{(j,r-1)},\nu^{(j,r-1)}|_{t_{j+1}}).
  \end{equation}
  \end{itemize} 

 Clearly,
 \begin{equation*}
\mathcal{E}_{\infty}=\bigcap_{n=1}^{\infty }\bigcup_{\nu \in D_{n}}J_{n}(\nu) .
\end{equation*}

 \subsubsection{Lengths of fundamental cylinders} \
In the following subsection we will estimate the lengths of the fundamental
cylinders defined above.

Let the continued fraction representation for any $x\in   \mathcal{E}_{\infty} $ be 
 $$[\nu^{(j-1,r-1)}, \nu^{(j-1,r-1)}|_{t_{j}},b^{(j)}_{1},\cdots,b^{(j)}_{m_{j}},a_{n_{j}},\cdots, a_{n_{j}+(r-1)},\cdots].$$ 
 
\noindent \textbf{\ I.} If $n=(n_{j}+(r-1))+t_{j+1},$ then by using  \eqref{eq P3}
\begin{multline*}
q_{(n_{j}+(r-1))+t_{j+1}}(\nu^{(j-1,r-1)}, \nu^{(j-1,r-1)}|_{t_{j}},\\
b^{(j)}_{1},\cdots,b^{(j)}_{m_{j}},a_{n_{j}},\cdots, a_{n_{j}+(r-1)},\nu^{(j,r-1)}|_{t_{j+1}})\\
\leq 2^{3r+2} q_{(n_{j-1}+(r-1))+t_{j}}(\nu^{(j-1,r-1)}, \nu^{(j-1,r-1)}|_{t_{j}})\cdot q_{m_{j}}(  b^{(j)}_{1},\cdots,b^{(j)}_{m_{j}} )\\ \cdot e^{\tau(z_{j})S_{l_{j}}h(z_{j})}\cdot q_{t_{j+1}}(\nu^{(j,r-1)}|_{t_{j+1}}).
\end{multline*}

Next from $\rm{(iii)}$ of Proposition \ref{pp3} and by the choice of $m_{j},$ 
\begin{align}
q_{(n_{j}+(r-1))+t_{j+1}}(x)
& \leq    q_{(n_{j-1}+(r-1))+t_{j}}(x)\cdot(q_{l_{j}}( z_{j} )e^{\tau(z_{j})S_{l_{j}}h(z_{j})})^{1+\epsilon} \notag \\
& \leq \prod^{j}_{k=1}(q_{l_{k}}(z_{k})e^{\tau(z_{k})    S_{l_{k}}  h(z_{k})})^{1+\epsilon}, \label{eqI}
\end{align}
where
$z_{k}\in I_{l_{k}}(  \nu^{(k-1,r-1)}|_{t_{k}},b^{(k)}_{1},\cdots,b^{(k)}_{m_{k}}) \text{ for all } 1\leq k \leq j.$ 

\noindent \textbf{\ II.} If $(n_{j}+(r-1))\leq n< (n_{j}+(r-1))+t_{j+1},$ then
{\allowdisplaybreaks
\begin{eqnarray*}
q_{n}(x)\leq           q_{(n_{j}+(r-1))+{t_{j+1}}}  (x)                \leq \prod^{j}_{k=1}(q_{l_{k}}(z_{k})\cdot e^{\tau(z_{k})    S_{l_{k}}  h(z_{k})})^{1+\epsilon}.
\end{eqnarray*}
}
\noindent \textbf{\ III.} When $(n_{j-1}+(r-1))+t_{j}\leq n\leq n_{j}-1,$ represent $n-(n_{j-1}+(r-1))-t_{j}$ by $l,$ then
\begin{align*}
q_{n}(x)&\leq      2     q_{(n_{j-1}+(r-1))+{t_{j}}} (x) \cdot q_{l}(b^{(j)}_{1},\cdots,b^{(j)}_{l})        
 \\     & \leq \prod^{j-1}_{k=1}(q_{l_{k}}(z_{k})e^{\tau(z_{k})    S_{l_{k}}  h(z_{k})})^{1+\epsilon}\cdot q_{l}(b^{(j)}_{1},\cdots,b^{(j)}_{l}).
\end{align*}

Now we calculate the lengths of fundamental cylinders for different cases \eqref{eq3a}--\eqref{eq3e} defined above.
\\

 \noindent \textbf{\ I.} If $(n_{j-1}+(r-1))\leq n\leq (n_{j-1}+(r-1))+t_{j},$ then from  \eqref{cyle}, \eqref{eq3e} and \eqref{eqI}, 
\begin{eqnarray*}
\left\vert J_{n}(x)\right\vert &=&\left\vert I_{(n_{j-1}+(r-1))+t_{j}}(x)
\right\vert \geq \frac{1}{2q^{2}_ {(n_{j-1}+(r-1))+t_{j}}(x) }
 \notag \\
&\geq&   \frac{1}{2}   \prod^{j-1}_{k=1}(q_{l_{k}}(z_{k})\cdot e^{\tau(z_{k})    S_{l_{k}}  h(z_{k})})^{-2(1+\epsilon)}. 
\end{eqnarray*}
 \\
  \noindent \textbf{\ II.} If $(n_{j-1}+(r-1))+t_{j}< n< n_{j}-1$ and $l=n-(n_{j-1}+(r-1))-t_{j}-1,$ then from \eqref{cyle} and \eqref{eq3a},
  \begin{align*}
|J_{n}(x)|\geq \frac{1}{6q^{2}_{n} (x)}
  \geq \frac{1}{6}{\prod^{j}_{k=1}(q_{l_{k}}(z_{k})\cdot e^{\tau(z_{k})    S_{l_{k}}  h(z_{k})})^{-2(1+\epsilon)}\cdot q^{-2}_{l}(b^{(j)}_{1},\cdots,b^{(j)}_{l})}.
\end{align*}
 \\
 \noindent \textbf{III.} If $n=n_{j}-1,$ then following the similar steps
 as for \noindent \textbf{I} and using \eqref{eq3b} 
\begin{align*}
|J_{n_{j}-1}(x)|&\geq \frac{1}{6\nu_{n_{j}}(x)q^{2}_{n_{j}-1} (x)}\geq \frac{1}{6\gamma^{\tau(z_{j})S_{l_{j}}h(z_{j})}_{0}q^{2}_{n_{j}-1} (x)}
\\&  \geq \frac{1}{24  \gamma^{\tau(z_{j})S_{l_{j}}h(z_{j})} _{0}q^{2}_{l_{j}}(x)}\cdot \prod^{j-1}_{k=1}(q_{l_{k}}(z_{k})\cdot e^{\tau(z_{k})    S_{l_{k}}  h(z_{k})})^{-2(1+\epsilon)}.\end{align*}
\\
 \noindent \textbf{IV.} If $n=n_{j}+i-1 \ {\rm where} \ 1\leq i \leq r-2$ then from \eqref{eq3c} and following the similar steps
 as for \noindent \textbf{I,} 
{\allowdisplaybreaks
\begin{multline*}
|J_{n_{j}+i-1}(x)|\geq \frac{1}{6\nu_{n_{j}+i}(x)q^{2}_{n_{j}+i-1} (x)}\geq \frac{1}{6\gamma^{\tau(z_{j})S_{l_{j}}h(z_{j})}_{i}q^{2}_{n_{j}+i-1} (x)}
\\ \geq \frac{1}{6\cdot4^{i}   \gamma^{\tau(z_{j})S_{l_{j}}h(z_{j})}_{i}      (\gamma_{0}\cdots \gamma_{i-1})^{2\tau(z_{j})S_{l_{j}}h{(z_{j})}}q^{2}_{n_{j}-1}(x)}
\\ \geq \frac{1}{6\cdot4^{i}  \gamma^{\tau(z_{j})S_{l_{j}}h(z_{j})}_{i}      (\gamma_{0}\cdots \gamma_{i-1})^{2\tau(z_{j})S_{l_{j}}h{(z_{j})}}q^{2}_{l_{j}}(z_{j})}\\
\qquad \qquad \cdot \prod^{j-1}_{k=1}(q_{l_{k}}(z_{k})e^{\tau(z_{k})    S_{l_{k}}  h(z_{k})})^{-2(1+\epsilon)}.
\end{multline*}
 }
 \\
  \noindent \textbf{V.} If $n=n_{j}+(r-2),$ then from \eqref {eq3d}
\begin{multline*}
|J_{n_{j}+(r-2)}(x)|\geq \frac{1}{6\nu_{n_{j}+(r-1)}(x)q^{2}_{n_{j}+(r-2)}(x)} 
\\ \geq \frac{1}{6\cdot4^{r-1}(e\gamma_{0}\cdots \gamma_{r-2})^{\tau(z_{j})S_{l_{j}}h{(z_{j})}}q^{2}_{n_{j}-1}(x)}
\\  \geq \frac{1}{6\cdot4^{r}(e\gamma_{0}\cdots \gamma_{r-2})^{\tau(z_{j})S_{l_{j}}h{(z_{j})}}q^{2}_{l_{j}}(z_{j})} \\ \cdot \prod^{j-1}_{k=1}(q_{l_{k}}(z_{k})e^{\tau(z_{k})    S_{l_{k}}  h(z_{k})})^{-2(1+\epsilon)}.\end{multline*}
 
 \subsubsection{Supporting measure} \ 

We will define a probability measure supported on the set $\mathcal E_{\infty}.$

Define $s_{j}:=s^{(r)}_{(t_{j},m_{j}),M}$ to be the solution of
 \begin{equation*}
\sum_{\substack{ a_{1}=\nu^{(j-1,r-1)}_{1} ,\cdots, a_{t_{j}} =\nu^{(j-1,r-1)}_{t_{j}} \\
 1\leq b^{(j)}_{1},\cdots, b^{(j)}_{m_j} \leq M}}\frac{1}{e^{g_{r}(s)\tau{(z_{j})}S_{l_{j}}h(z_{j})}q^{2s}_{l_{j}}(z_{j})}=1\end{equation*}
where $z_{j}\in I_{l_{j}} (\nu^{(j-1,r-1)}|_{t_{j}}, b^{(j)}_{1},\cdots,b^{(j)}_{m_{j}})$ and 
the sequences $\{t_{j}\}_{j\geq1}$ and $\{m_{j}\}_{j\geq1}$ are defined previously. 

Consequently from \eqref{eqpro},
 \begin{equation}\label{meacond}
 \sum_{\substack{ a_{1}=\nu^{(j-1,r-1)}_{1} ,\cdots, a_{t_{j}} =\nu^{(j-1,r-1)}_{t_{j}}\\1\leq b^{(j)}_{1},\cdots, b^{(j)}_{m_j}\leq M }}\left(\frac{1}{\gamma_{0}^{\tau{(z_{j})}S_{l_{j}}h(z_{j})}q^{2}_{l_{j}}(z_{j})} \right)^{s}=1,
 \end{equation}
where $z_{j}\in I_{l_{j}} (\nu^{(j-1,r-1)}|_{t_{j}}, b^{(j)}_{1},\cdots,b^{(j)}_{m_{j}}).    $

Equality \eqref{meacond} induces a measure $\mu$ on basic cylinder of order $t_{j}+m_{j}$ if we consider 
\begin{equation*}
\mu(I_{n_{j}+t_{j}}(        a_{1} ,\cdots, a_{t_{j}}, b^{(j)}_{1},\cdots, b^{(j)}_{m_j}  ))= \left(\frac{1}{\gamma_{0}^{\tau{(z_{j})}S_{l_{j}}h(z_{j})}q^{2}_{l_{j}}(z_{j})} \right)^{s_{j}},
 \end{equation*} for each
$ a_{1}=\nu^{(j-1,r-1)}_{1} ,\cdots, a_{t_{j}} =\nu^{(j-1,r-1)}_{t_{j}},1\leq b^{(j)}_{1},\cdots, b^{(j)}_{m_j}\leq M. $

We will start by assuming that the measure of $   I_{{n_{j-1}+(r-1)}}(x)  \in  \mathcal E_{\infty}$ has been defined as
 \begin{equation*} 
 \mu \left( I_{{n_{j-1}+(r-1)}
}(x)\right)  =\prod^{j-1}_{k=1}        \left(               \left( \frac{1}{\gamma^{\tau(z_{k})S_{l_{k}}h(z_{k})} _{0}q_{l_{k}}^{2}(z_{k})}\right)
 ^{s_{k}}            \frac{1}{e^{\tau(z_{k})S_{l_{k}}h(z_{k})}  }                                    \right),\end{equation*}
where $z_{k}\in I_{l_{k}} (\nu^{(k-1,r-1)}|_{t_{k}}, b^{(k)}_{1},\cdots,b^{(k)}_{m_{k}})$ for all $1\leq k \leq j-1.$
\\

 \noindent \textbf{Case 1:} $n_{j-1}+(r-1)< n \leq n_{j-1}+(r-1)+t_{j}.$ As the basic cylinder of order ${n_{j-1}+(r-1)}$ contains only one sub-cylinder of order $n$ with a nonempty intersection with $\mathcal E_{\infty},$ therefore  
 \begin{align*}
  \mu (I_{n}(x))=\mu \left( I_{n_{j-1}+(r-1)}(x)\right).
  \end{align*}

 \noindent \textbf{Case 2:} $n=n_{j}-1.$ Let 
  \begin{equation*}
\mu (I_{n_{j}-1}(x))=\mu \left( I_{{n_{j-1}+(r-1)}
}(x)\right) \cdot 
\left( \frac{1}{\gamma^{\tau(z_{j})S_{l_{j}}h(z_{j})} _{0}q_{l_{j}}^{2}(z_{j})}\right)
 ^{s_{j}}.\end{equation*}  
    Next we uniformly distribute the measure of $I_{n_{j}-1}(x)$ on its sub-cylinders.
    
  \noindent \textbf{Case 3:} $n=n_{j}+i-1,$ where $1\leq i\leq r-1.$    
   \begin{align*}
  \mu (I_{n_{j}+i-1}(x))&=\mu \left( I_{n_{j}+i-2}(x)\right) \cdot         \frac{1}{\gamma_{i-1}^{\tau(z_{j})S_{l_{j}}h(z_{j})}}  \\&
  =\mu \left( I_{n_{j}-1}(x)\right) \cdot         \frac{1}{(\gamma_{0}\cdots\gamma_{i-1})^{\tau(z_{j})S_{l_{j}}h(z_{j})}}.
  \end{align*}
  
  \noindent \textbf{Case 4:}  $n=n_{j}+(r-1).$   
 \begin{align*}
  \mu (I_{n_{j}+r-1}(x))&= (\frac{\gamma_{0}\cdots\gamma_{r-2}}{e})^{\tau(z_{j})S_{l_{j}}h(z_{j})}\mu \left( I_{n_{j}+(r-2)}(x)\right)  
  \\&= (\frac{\gamma_{0}\cdots\gamma_{r-2}}{e})^{\tau(z_{j})S_{l_{j}}h(z_{j})}   {\frac{1}{  ( \gamma_{0}\cdots\gamma_{r-2})     ^{\tau(z_{j})S_{l_{j}}h(z_{j})}}}\\
 &{} \qquad \qquad \qquad \qquad \cdot \mu \left( I_{n_{j}+(r-2)}(x)\right)    \\  &=\frac{1}{e^ {\tau(z_{j})S_{l_{j}}h(z_{j}) }} \mu \left( I_{n_{j}-1}(x)\right).
  \end{align*}   
  The measure of other basic cylinders of order less than $ n_{j}-1$ is followed by the consistency property that a measure should satisfy. 
  
  \noindent For any $n_{j-1}+(r-1)+t_{j}< n \leq n_{j}-1,$ let  \begin{align*}
  \mu (I_{n}(x))&=\sum_{I_{n_{j}-1}(x)\subset I_{n}(x)} \mu \left( I_{n_{j}-1}(x)\right) .  \end{align*}      
  
 \subsubsection{The H\"{o}lder exponent of the measure $\protect\mu $} \
 
In this part we will compare the measure of fundamental cylinders with their lengths.

\noindent \textbf{Case 1:} $n=n_{j}-1.$ 
 \begin{multline} 
 \mu \left( J_{{n_{j}-1}
}(x)\right) =\prod^{j-1}_{k=1}        \left(               \left( \frac{1}{\gamma^{\tau(z_{k})S_{l_{k}}h(z_{k})} _{0}q_{l_{k}}^{2}(z_{k})}\right)
 ^{s_{k}}            \frac{1}{e^{\tau(z_{k})S_{l_{k}}h(z_{k})}  }   \right)\\ \cdot \left( \frac{1}{\gamma^{\tau(z_{j})S_{l_{j}}h(z_{j})} _{0}q_{l_{j}}^{2}(z_{j})}\right)
 ^{s_j}\notag
  \\ \leq
  \prod^{j-1}_{k=1}         \left( \frac{1}{\gamma^{s_{k}\tau(z_{k})S_{l_{k}}h(z_{k})} _{0} e^{\tau(z_{k})S_{l_{k}}h(z_{k})} q_{l_{k}}^{2s_k}(z_{k})}\right)
   \cdot \left( \frac{1}{\gamma^{\tau(z_{j})S_{l_{j}}h(z_{j})} _{0}q_{l_{j}}^{2}(z_{j})}\right)
 ^{s^{(r)}_M-3\epsilon}  \notag
 \\  \leq \label{zz}
 \prod^{j-1}_{k=1}         \left( \frac{1}{ e^{2s_{k}\tau(z_{k})S_{l_{k}}h(z_{k})} q_{l_{k}}^{2s_k}(z_{k})}\right)
   \cdot \left( \frac{1}{\gamma^{\tau(z_{j})S_{l_{j}}h(z_{j})} _{0}q_{l_{j}}^{2}(z_{j})}\right)
 ^{s^{(r)}_M-3\epsilon}  
 \\ \leq
\left( \prod^{j-1}_{k=1}         \left( \frac{1}{ e^{2\tau(z_{k})S_{l_{k}}h(z_{k})} q_{l_{k}}^{2}(z_{k})}\right)^{1+\epsilon} \right)^{\frac{s^{(r)}_M-3\epsilon}{1+\epsilon}}
   \cdot \left( \frac{1}{\gamma^{\tau(z_{j})S_{l_{j}}h(z_{j})} _{0}q_{l_{j}}^{2}(z_{j})}\right)
 ^\frac{s^{(r)}_{M}-3\epsilon}{1+\epsilon} \notag
  \\ \leq 24 |J_{n_{j}-1}(x)| ^     \frac{s^{(r)}_{M}-3\epsilon}{1+\epsilon}. \notag \end{multline}

From Corollary \ref{limcor}, we have $| s_{j}-s^{(r)}_{M}| \leq 3\epsilon$ which further implies that $s^{(r)}_{M} -3\epsilon \leq s_{j}.$ In \eqref{zz}  we have used the fact that since  $1\leq \gamma_{0}\cdots\gamma_{r-2}\leq e$ and $\frac{e}{ \gamma_{0}\cdots\gamma_{r-2}}\geq \left(\frac{e}{ \gamma_{0}\cdots\gamma_{r-2}  }  \right)^{s}$
for any $0<s<1,$ we have $e\gamma^{s}_{0}\geq e^{2s}.$ Therefore it is also true for $s_{k}$.

 \noindent \textbf{Case 2:} $n=n_{j}+i-1,$ where $1\leq i\leq r-2.$    
{\allowdisplaybreaks  
  \begin{align}
  \mu (J_{n_{j}+i-1}(x))
   &{}=   \mu \left( I_{n_{j}-1}(x)\right) \cdot         \frac{1}{(\gamma_{0}\cdots\gamma_{i-1})^{\tau(z_{j})S_{l_{j}}h(z_{j})}} \notag
 \\ &{} \leq 
\left( \prod^{j-1}_{k=1}         \left( \frac{1}{ e^{2\tau(z_{k})S_{l_{k}}h(z_{k})} q_{l_{k}}^{2}(z_{k})}\right)^{1+\epsilon} \right)^{\frac{s^{(r)}_M-3\epsilon}{1+\epsilon}} \notag \\ & \cdot \left( \frac{1}{\gamma^{\tau(z_{j})S_{l_{j}}h(z_{j})} _{0}q_{l_{j}}^{2}(z_{j})}\right)
 ^\frac{s^{(r)}_{M}-3\epsilon}{1+\epsilon}  \cdot         \frac{1}{(\gamma_{0}\cdots\gamma_{i-1})^{\tau(z_{j})S_{l_{j}}h(z_{j})}}                    \notag  
 \\  &{} \leq  
\left( \prod^{j-1}_{k=1}         \left( \frac{1}{ e^{2\tau(z_{k})S_{l_{k}}h(z_{k})} q_{l_{k}}^{2}(z_{k})}\right)^{1+\epsilon} \right)^{\frac{s^{(r)}_M-3\epsilon}{1+\epsilon}}  \\ &{}  \cdot \left( \frac{1}{\gamma^{\tau(z_{j})S_{l_{j}}h(z_{j})} _{0}q_{l_{j}}^{2}(z_{j})}\right) 
 ^\frac{s^{(r)}_{M}-3\epsilon}{1+\epsilon} \frac{1}{(\gamma_{0}(\gamma_{1}\cdots\gamma_{i-1})^{2}\gamma_{i})^{\tau(z_{j})S_{l_{j}}h(z_{j})}}  \label{HXC2iv}
  \\ & \leq  
\left( \prod^{j-1}_{k=1}         \left( \frac{1}{ e^{2\tau(z_{k})S_{l_{k}}h(z_{k})} q_{l_{k}}^{2}(z_{k})}\right)^{1+\epsilon} \right)^{\frac{s^{(r)}_M-3\epsilon}{1+\epsilon}} \notag
\\ &{} \cdot \left( \frac{1}{(\gamma_{0}\gamma_{1} \cdots\gamma_{i-1})^{2\tau(z_{j})S_{l_{j}}h(z_{j})}  \gamma_{i}^{\tau(z_{j})S_{l_{j}}h(z_{j}) }          q_{l_{j}}^{2}(z_{j})}\right) ^{\frac{s^{(r)}_M-3\epsilon}{1+\epsilon}} \notag
 \\ &{} \leq   6.4^{i+1} |J_{n_{j}+i-1}(x)| ^     \frac{s^{(r)}_{M}-3\epsilon}{1+\epsilon}  \notag\\ & \leq 6.4^{r-3} |J_{n_{j}+i-1}(x)| ^     \frac{s^{(r)}_{M}-3\epsilon}{1+\epsilon}. \notag
    \end{align}
    } 
  Equation \eqref{HXC2iv} is obtained 
 by using the fact that $\frac{1}{\gamma_{0}\gamma_{1}\cdots\gamma_{i-1}}\leq (\frac{1}{\gamma_{0}(\gamma_{1}\cdots\gamma_{i-1})^{2}\gamma_{i}})^{s}$ for any $0<s<1.$ 
 
 \noindent \textbf{Case 3:} $n=n_{j}+r-2.$   
 {\allowdisplaybreaks
 \begin{align}
&  \mu (J_{n_{j}+r-2}(x))=\mu \left( J_{n_{j}-1}(x)\right) \cdot         \frac{1}{(\gamma_{0}\cdots\gamma_{r-2})^{\tau(z_{j})S_{l_{j}}h(z_{j})}} \notag
 \\&  \leq
\left( \prod^{j-1}_{k=1}         \left( \frac{1}{ e^{2\tau(z_{k})S_{l_{k}}h(z_{k})} q_{l_{k}}^{2}(z_{k})}\right)^{1+\epsilon} \right)^{\frac{s^{(r)}_M-3\epsilon}{1+\epsilon}} \notag
    \cdot \left( \frac{1}{\gamma^{\tau(z_{j})S_{l_{j}}h(z_{j})} _{0}q_{l_{j}}^{2}(z_{j})}\right)
 ^\frac{s^{(r)}_{M}-3\epsilon}{1+\epsilon} \\ & \quad \quad  \cdot       \left(  \frac{1}{(e\gamma_{1}\cdots\gamma_{r-2})^{\tau(z_{j})S_{l_{j}}h(z_{j})}}  \right)  ^{s_{j}}                \notag  
 \\ & \leq 
\left( \prod^{j-1}_{k=1}         \left( \frac{1}{ e^{2\tau(z_{k})S_{l_{k}}h(z_{k})} q_{l_{k}}^{2}(z_{k})}\right)^{1+\epsilon} \right)^{\frac{s^{(r)}_M-3\epsilon}{1+\epsilon}} \notag
 \\ &  \quad \quad   \cdot \left( \frac{1}{  (e\gamma_{0}\cdots\gamma_{r-2}) ^{\tau(z_{j})S_{l_{j}}h(z_{j})} q_{l_{j}}^{2}(z_{j})}\right)
 ^\frac{s^{(r)}_{M}-3\epsilon}{1+\epsilon}  \notag  
   \\ & \leq 6.4^{r} |J_{n_{j}+r-2}(x)| ^     \frac{s^{(r)}_{M}-3\epsilon}{1+\epsilon}. \notag
    \end{align} 
    }
 
  \noindent \textbf{Case 4:} $n_{j}+(r-1)\leq n \leq n_{j}+t_{j+1}.$   
 {\allowdisplaybreaks 
   \begin{align*}
  \mu (J_{n}(x))&     =\prod^{j-1}_{k=1}        \left(               \left( \frac{1}{\gamma^{\tau(z_{k})S_{l_{k}}h(z_{k})} _{0}q_{l_{k}}^{2}(z_{k})}\right)
 ^{s_{k}}            \frac{1}{e^{\tau(z_{k})S_{l_{k}}h(z_{k})}  }   \right) \\ &\cdot \left( \frac{1}{\gamma^{\tau(z_{j})S_{l_{j}}h(z_{j})} _{0}q_{l_{j}}^{2}(z_{j})}\right)
 ^{s_j}             \frac{1}{e^ {\tau(z_{j})S_{l_{j}}h(z_{j}) }} 
 \\& = \prod^{j}_{k=1}        \left(               \left( \frac{1}{\gamma^{\tau(z_{k})S_{l_{k}}h(z_{k})} _{0}q_{l_{k}}^{2}(z_{k})}\right)
 ^{s_{k}}            \frac{1}{e^{\tau(z_{k})S_{l_{k}}h(z_{k})}  }   \right)  
 \\ & \leq  \prod^{j}_{k=1}                   \left( \frac{1}{e^{2\tau(z_{k})S_{l_{k}}h(z_{k})} _{0}q_{l_{k}}^{2}(z_{k})}\right)
 ^{s_{k}}     
 \\ & \leq  \prod^{j}_{k=1}                   \left( \frac{1}{e^{2\tau(z_{k})S_{l_{k}}h(z_{k})} _{0}q_{l_{k}}^{2}(z_{k})}\right)
 ^ {s^{(r)}_{M}-3\epsilon}  
 \\ & \leq \left( \prod^{j}_{k=1}                   \left( \frac{1}{e^{2\tau(z_{k})S_{l_{k}}h(z_{k})} _{0}q_{l_{k}}^{2}(z_{k})}\right)^{1+\epsilon}
 \right)^\frac {s^{(r)}_{M}-3\epsilon}  {1+\epsilon} \leq 2 |J_{n}(x)| ^     \frac{s^{(r)}_{M}-3\epsilon}{1+\epsilon}. \end{align*} 
  }
\subsubsection{\noindent {Gap estimation.}} \
  
  Let $x\in   \mathcal{E}_{\infty}.$ In this section we estimate the gap between $J_{n}(x)$ and 
its adjacent fundamental cylinder $J_{n}(x')$ of the same order $n$. Assume that $a_{i}(x)=a_{i}(x')$ for all $1\leq i < n.$ These gaps are
helpful for estimating the measure on general balls. Also as $J_{n}(x)$ and $J_{n}(x')$ are adjacent, we have $\vert a_{n}(x) - a_{n}(x')\vert=1.$

Let the left and the right gap between $J_{n}(x)$ and
its adjacent fundamental cylinder at each side be represented by $g_{n}^{L
}(x)$ and $g_{n}^{R}(x)$ respectively. 

Denote by $g^{L,R}_{n}(x)$ the minimum distance between $
J_{n}(x)$ and its adjacent cylinder of the same order $n$,
that is,
\begin{equation*}
{g^{L,R}_{n}}(x)=\min \{g_{n}^{L }(x),g_{n}^{R}(x)\}.
\end{equation*}
 Without loss of generality we assume that $n$ is
even and estimate $g_{n}^{R}(x)$ only, since if $n$ is odd then for $g_{n}^{L
}(x)$ we can carry out the estimation in almost the same
way.   
  
 \noindent \textbf{Gap I.} When $(n_{j-1}+(r-1))+t_{j}< n< n_{j}-1$ for
all $j\geq 1,$ 
\begin{eqnarray*}
g^{R}_{n}\left( x\right) &\geq& 
\sum _{a_{n+1}>M}\vert I_{n+1}\left( a_{1},a_{2},\ldots ,a_{n-1},a_{n}+1,a_{n+1}\right) \vert \\ 
&=&\frac{\left( M+1\right)
\left( p_{n}+p_{n-1}\right) +p_{n-1}}{\left( M+1\right) \left(
q_{n}+q_{n-1}\right) +q_{n-1}}-\frac{p_{n}+p_{n-1}}{q_{n}+q_{n-1}} \\
&=&\frac{1}{\left( \left( M+1\right) \left( q_{n}+q_{n-1}\right)
+q_{n-1}\right) \left( q_{n}+q_{n-1}\right) }
\\ & \geq &\frac{1}{3Mq^{2}_{n}}\geq   \frac{1}{3M}\vert I_{n}(x)\vert .
\end{eqnarray*}
  \noindent \textbf{Gap II.} When $n=n_{j}+i-1 \ {\rm where} \ 0\leq i \leq r-2,$  
  \begin{eqnarray*}
g^{R}_{n}\left(x\right) 
 &\geq & \frac {p_{n}+p_{n-1}}{q_{n}+q_{n-1}}-\frac{ \gamma^{\tau (z_{j})S_{l_{j}}h(z_{j})}_{i}p_{n}+p_{n-1}}{
   \gamma^{\tau (z_{j})S_{l_{j}}h(z_{j})}_{i}q_{n} +q_{n-1}}
 \\
&=&\frac{     \gamma^{\tau (z_{j})S_{l_{j}}h(z_{j})}_{i}   -1}{\left(    \gamma^{\tau (z_{j})S_{l_{j}}h(z_{j})}_{i}       q_{n}+q_{n-1}\right) \left(
q_{n}+q_{n-1}\right) } \geq \frac{     {     \gamma^{\tau (z_{j})S_{l_{j}}h(z_{j})}_{i}   -1     }     }{4   \gamma^{\tau (z_{j})S_{l_{j}}h(z_{j})}_{i} q^2_{n}   } 
\\  &\geq &\frac{     {     \gamma^{\tau (z_{j})S_{l_{j}}h(z_{j})}_{i}       }     }{8  \gamma^{\tau (z_{j})S_{l_{j}}h(z_{j})}_{i} q^2_{n}   } \geq \frac{1}{8} \vert I_{n}(x)\vert.
\end{eqnarray*}
   \noindent \textbf{Gap III.} When $n=n_{j}+r-2,$ 
  \begin{eqnarray*}
g^{R}_{n}\left(x\right) 
&\geq&\frac{     (\frac{e}{\gamma_{0}\cdots \gamma_{r-2}})^{\tau (z_{j})S_{l_{j}}h(z_{j})}   -1}{\left(      (\frac{e}{\gamma_{0}\cdots \gamma_{r-2}})^{\tau (z_{j})S_{l_{j}}h(z_{j})}            q_{n}+q_{n-1}\right) \left(
q_{n}+q_{n-1}\right) } 
\\  &\geq &\frac{         1    }{8   q^2_{n}   } \geq \frac{1}{8} \vert I_{n}(x)\vert.
\end{eqnarray*}
 \noindent \textbf{Gap IV.} If $(n_{j}+(r-1))\leq n\leq (n_{j}+(r-1))+t_{j+1}$ then note that $J_{n}(x)$ is a small part of $I_{n_{j}+(r-1)}(x)$ as $I_{(n_{j}+(r-1))+t_{j+1}}(x) \subset I_   {(n_{j}+(r-1))+2}(x).$  Therefore, the right gap is larger then the distance between the right endpoints of $J_{n}(x)$ and that of $I_{n_{j}+(r-1)}(x).$ 
\begin{eqnarray*}
g^{R}_{n}\left(x\right) 
&\geq&\left\vert I_{(n_{j}+(r-1))+2}(x)
\right\vert \geq \frac{1}{2q^{2}_ {(n_{j}+(r-1))+2}(x) }\geq \frac{1}{32 a^{2}_{1}a^{2}_{2} q^{2}_{n_{j}+(r-1)}(x)}
 \notag \\
&\geq&   \frac{1}{32 a^{2}_{1}a^{2}_{2} q^{2}_{(n_{j}+(r-1))+t_{j}}(x)} \geq  \frac{1}{32 a^{2}_{1}a^{2}_{2}} \vert  I_{(n_{j}+(r-1))+t_{j}} (x) \vert 
\\ &=&  \frac{1}{32 a^{2}_{1}a^{2}_{2}} \vert  J_{(n_{j}+(r-1))+t_{j}}(x)  \vert   ,    \end{eqnarray*}
   where $a_{1}$ represents $a_{(n_{j}+(r-1))+1}(x)$ and $a_{2}$ represents $a_{(n_{j}+(r-1))+2}  (x).$

  \subsubsection{The measure $\protect\mu $ on general ball $B(x,d)$}\

We now estimate the measure $\mu $ on any ball $
B(x,d)$ with radius $d$ and centred at~$x.$ Fix $x\in \mathcal{E}_{\infty}.$ There exists a unique sequence $(\nu_{1},\nu_{2},\cdots
\nu_{n},\cdots)$ such that for each $n\geq1,$
 {$x \in J_{n}(\nu_{1},\cdots ,\nu_{n})$ where $(\nu_{1}\cdots,\nu_{n})\in     \mathcal D_{n}     $ and 
$g^{R}_{n+1}(x)\leq d<g^{R}_{n}(x).$
Clearly, $B(x,d)$ can intersect only
one fundamental cylinder of order $n,$ i.e., $J_{n}(\nu_{1},\ldots ,\nu_{n}).$

\noindent \textbf{Case I:}  $(n_{j-1}+(r-1))+t_{j}< n< n_{j}-1$, for
all $j\geq 1$ or $ n=n_{j}+t_{j+1}.$
Since in this case $1\leq a_{n}(x)\leq M$ and $|J_{n}(x)|\leq 1/q_{n}^{2}$ thus  
\begin{align*}
\mu (B(x,d))& \leq \mu (J_{n}(x))\leq c|J_{n}(x)|^{          \frac{s^{(r)}_{M}-3\epsilon}{1+\epsilon}             }\\
& \leq c\left( \frac{1}{q_{n}^{2}}\right) ^      {          \frac{s^{(r)}_{M}-3\epsilon}{1+\epsilon}             }\              \\
& \leq c4M^{2}\left( \frac{1}{q_{n+1}^{2}}\right) ^{          \frac{s^{(r)}_{M}-3\epsilon}{1+\epsilon}             }\\
& \leq c8M^{2}  {\vert I_{n+1} {(x) }\vert^     \frac{s^{(r)}_{M}-3\epsilon}{1+\epsilon}             }   \\
& \leq c48M^{3}\big({g^{R}_{n+1}}(x)\big)^    {          \frac{s^{(r)}_{M}-3\epsilon}{1+\epsilon}             }      \\
& \leq cc_{0}^{3}d^    {          \frac{s^{(r)}_{M}-3\epsilon}{1+\epsilon}             }        .
\end{align*}

\noindent \textbf{Case II:}  $n=n_{j}+i-1 \ {\rm where} \ 0\leq i \leq r-2.$ 
Since 
\begin{equation*}
\frac{1}{8\gamma^{2\tau(z_{j})S_{l_{j}}h(z_{j})}_{i}q^{2}_{n_{j}+{i-1}}(x)}\leq|I_{n_{j}+i}(x)|\leq 8 g^{R}_{n_{j}+i}(x)\le 8d\end{equation*}  implies 
\begin{equation*}
 1\leq 64d \gamma^{2\tau(z_{j})S_{l_{j}}h(z_{j})}_{i}q^{2}_{n_{j}+{i-1}}(x),\end{equation*}
 the number of fundamental cylinders of order $n_{j}+i$ contained in $
J_{n_{j}+i-1}(x)$ that the ball $B(x,d)$ intersects is at
most 
\begin{align*}
\frac{2d}{\vert I_{n_{j}+{i}}(x)\vert}+2&\leq 16d\gamma^{2\tau(z_{j})S_{l_{j}}h(z_{j})}_{i}q^{2}_{n_{j}+{i-1}}(x) +2^{7} d    \gamma^{2\tau(z_{j})S_{l_{j}}h(z_{j})}_{i}\\&=c_{0}d\gamma^{2\tau(z_{j})S_{l_{j}}h(z_{j})}_{i}q^{2}_{n_{j}+{i-1}}(x). \end{align*}

Therefore, 
\begin{align*}
&\mu (B(x,d))\\& \leq \min \Big\{\mu (J_{n_{j}+i-1}(x)),c_{0}d\gamma^{2\tau(z_{j})S_{l_{j}}h(z_{j})}_{i}q^{2}_{n_{j}+{i-1}}(x) \mu (J_{n_{j}+i}(x))\Big\} \\
& \leq \mu (J_{n_{j}+{i-1}}(x))\min \Big\{1,            c_{0}d\gamma^{2\tau(z_{j})S_{l_{j}}h(z_{j})}_{i}q^{2}_{n_{j}+{i-1}}(x) \frac{1}{   \gamma^{\tau(z_{j})S_{l_{j}}h(z_{j})}_{i}          }                              \Big\} \\
& \leq 6\cdot 4^{i+1}|J_{n_{j}+{i-1}}(x)|^    \frac{s^{(r)}_{M}-3\epsilon}{1+\epsilon}\min \Big\{1,                                   c_{0}d\gamma^{\tau(z_{j})S_{l_{j}}h(z_{j})}_{i}q^{2}_{n_{j}+{i-1}} (x)     \Big\} \\
& \leq c\Big(\frac{1}{   \gamma^{\tau(z_{j})S_{l_{j}}h(z_{j})}_{i}q^{2}_{n_{j}+{i-1}} (x)                      }\Big)^   \frac{s^{(r)}_{M}-3\epsilon}{1+\epsilon}   \Big( c_{0}d\gamma^{\tau(z_{j})S_{l_{j}}h(z_{j})}_{i}q^{2}_{n_{j}+{i-1}}(x)  \Big) ^  \frac{s^{(r)}_{M}-3\epsilon}{1+\epsilon}              \\
& \leq cc_{0}d^{          \frac{s^{(r)}_{M}-3\epsilon}{1+\epsilon}             }.
\end{align*}
Here we have used the fact that $\min \{a,b\}\leq a^{1-s}b^{s}$ for any $a,b>0$ and $0\leq s\leq
1$.

  \noindent \textbf{Case III:}  $n=n_{j}+r-2.$ 
As
\begin{equation*}
\frac{(\gamma_{0}\cdots \gamma_{r-2})^{{2\tau(z_{j})S_{l_{j}}h(z_{j})}}}{8e^{2\tau(z_{j})S_{l_{j}}h(z_{j})}q^{2}_{n_{j}+r-2}(x)}\leq|I_{n_{j}+r-1}(x)|\leq 8 g^{R}_{n_{j}+r-1}(x)\le 8d, \end{equation*}
 the number of fundamental cylinders of order $n_{j}+r-1$ contained in cylinder $
J_{n_{j}+r-2}(x)$ that the ball $B(x,d)$ intersects is at
most 
\begin{equation*}
\frac{2d}{\vert I_{n_{j}+r-1}(x)\vert}+2\leq c_{0}d      \frac{e^{{2\tau(z_{j})S_{l_{j}}h(z_{j})}}}{   (\gamma_{0}\cdots \gamma_{r-2})  ^{2\tau(z_{j})S_{l_{j}}h(z_{j})} }      q^{2}_{n_{j}+{r-1}}(x).               \end{equation*}

Therefore, 
{\allowdisplaybreaks
\begin{align*}
&\mu (B(x,d))\\ \leq & \min \begin{aligned} \Big\{ \mu (J_{n_{j}+r-2}(x)),c_{0}d       \frac{e^{{2\tau(z_{j})S_{l_{j}}h(z_{j})}}}{   (\gamma_{0}\cdots \gamma_{r-2})  ^{2\tau(z_{j})S_{l_{j}}h(z_{j})} } &   q^{2}_{n_{j}+{r-2}} (x)    \\&       \mu (J_{n_{j}+r-1}(x)) \Big\} \end{aligned}\\
 \leq &\mu (J_{n_{j}+{r-2}}(x))\\ &  \min \Big\{1, c_{0}d       \frac{e^{{2\tau(z_{j})S_{l_{j}}h(z_{j})}}}{   (\gamma_{0}\cdots \gamma_{r-2})  ^{2\tau(z_{j})S_{l_{j}}h(z_{j})} }    q^{2}_{n_{j}+{r-2}} (x)                       \frac{(\gamma_{0}\cdots\gamma_{r-2})^{\tau(z_{j})S_{l_{j}}h(z_{j})} }    {  {e}^{\tau(z_{j})S_{l_{j}}h(z_{j})} }                                        \Big\} \\
 \leq & 6\cdot 4^{r}|J_{n_{j}+{r-2}}(x)|^    \frac{s^{(r)}_{M}-3\epsilon}{1+\epsilon}\min \Big\{1,c_{0}d          \frac{e^{{\tau(z_{j})S_{l_{j}}h(z_{j})}}}{   (\gamma_{0}\cdots \gamma_{r-2})  ^{\tau(z_{j})S_{l_{j}}h(z_{j})} }      q^{2}_{n_{j}+{r-2}} (x)                                \Big\} \\
 \leq & c\Big(\frac{   (\gamma_{0}\cdots \gamma_{r-2})  ^{\tau(z_{j})S_{l_{j}}h(z_{j})}        }{   e^{\tau(z_{j})S_{l_{j}}h(z_{j})}q^{2}_{n_{j}+{r-2}}                       }\Big)^   \frac{s^{(r)}_{M}-3\epsilon}{1+\epsilon}  \\ & \qquad \qquad \cdot \Big( c_{0}d    \frac{e^{{\tau(z_{j})S_{l_{j}}h(z_{j})}}}{   (\gamma_{0}\cdots \gamma_{r-2})  ^{\tau(z_{j})S_{l_{j}}h(z_{j})} }      q^{2}_{n_{j}+{r-2}}(x)                      \Big) ^  \frac{s^{(r)}_{M}-3\epsilon}{1+\epsilon}   \\          
 \leq & cc_{0}d^{ \frac{s^{(r)}_{M}-3\epsilon}{1+\epsilon} }.
\end{align*}
}
By combing all the above cases and using the mass distribution principle, we conclude that 
\begin {equation*}\dim _{\mathrm{H}} \mathcal R_{r}(\tau)\geq   \dim _{\mathrm{H}}   \mathcal E_{\infty} \geq s_{0}={ \frac{s^{(r)}_{M}-3\epsilon}{1+\epsilon}. }\end{equation*}
Since $\epsilon>0$ is arbitrary, as $\epsilon \to 0$ we have $s_{0} \to s^{(r)}_{M}.$ Further letting $M \to \infty,$ 
\begin{equation*}\dim _{\mathrm{H}} \mathcal R_{r}(\tau) \geq s^{(r)}_{\N}. \end{equation*}
 This completes the proof for the lower bound of Theorem \protect\ref{ddd}.
 
  \subsection*{Acknowledgements} 
 The author would like to thank  Prof. Brian A. Davey and Dr. Mumtaz Hussain for useful comments and discussion on this problem. The author is grateful to the anonymous referee 
 for meticulous reading of the paper and for making many helpful suggestions that improved the presentation of this paper.

\def\cprime{$'$} \def\cprime{$'$} \def\cprime{$'$} \def\cprime{$'$}
  \def\cprime{$'$} \def\cprime{$'$} \def\cprime{$'$} \def\cprime{$'$}
  \def\cprime{$'$} \def\cprime{$'$} \def\cprime{$'$} \def\cprime{$'$}
  \def\cprime{$'$} \def\cprime{$'$} \def\cprime{$'$} \def\cprime{$'$}
  \def\cprime{$'$} \def\cprime{$'$} \def\cprime{$'$} \def\cprime{$'$}
  \def\cprime{$'$}

 \bibliographystyle{line}

\begin{thebibliography}{10}

   \bibitem{BaBoHu_19}
A.~Bakhtawar, P.~Bos and M.~Hussain,
\newblock 
`The sets of Dirichlet non-improvable numbers versus well-approximable numbers',
\newblock {\em  Ergodic Theory and Dynam. Systems}, to appear. Published online 
(27 June 2019).
     
              
  
\bibitem{BaBoHu_20}
A.~Bakhtawar, P.~Bos and M.~Hussain,
\newblock `Hausdorff dimension of an exceptional set in the theory of continued fractions',
\newblock {\em Nonlinearity}, 33(6) (2020), 2615--2640.




\bibitem{BaSe_11}
J.~Barral and S.~Seuret,
\newblock `A localized {J}arn\'{\i}k--{B}esicovitch theorem',
\newblock {\em Adv. Math.}, 226(4) (2011), 3191--3215.

\bibitem{Bes34}
A.~S. Besicovitch,
\newblock `Sets of {F}ractional {D}imensions ({IV}): {O}n {R}ational
  {A}pproximation to {R}eal {N}umbers',
\newblock {\em J. Lond. Math. Soc.}, 9(2) (1934), 126--131.

\bibitem{F_14}
K.~Falconer,
\newblock {\em Fractal Geometry: Mathematical Foundations and Applications}, 3rd edn,
\newblock (John Wiley \& Sons, Ltd., Chichester, 2014).


\bibitem{HaMU}
P.~Hanus, R.~D. Mauldin, and M.~Urba\'{n}ski,
\newblock `Thermodynamic formalism and multifractal analysis of conformal
  infinite iterated function systems',
\newblock {\em Acta Math. Hungar.}, 96(1-2) (2002), 27--98.

\bibitem{HuWuXu}
L.~Huang, J.~Wu, and J.~Xu,
\newblock {`Metric properties of the product of consecutive partial quotients in continued fractions'},
\newblock{\em Israel J. Math.}, 238 (2020), 901-943.


\bibitem{HKWaW17}
M.~Hussain, D.~Kleinbock, N.~Wadleigh, and B.-W. Wang,
\newblock `Hausdorff measure of sets of {D}irichlet non-improvable numbers',
\newblock {\em Mathematika}, 64(2) (2018), 502--518.

\bibitem{Jar31}
V.~Jarn\'{\i}k,
\newblock `\"{U}ber die simultanen diophantischen approximationen',
\newblock {\em Math. Z.}, 33 (1931), 505--543. 


\bibitem{Khi_63}
A.~Y. Khintchine,
\newblock {\em Continued fractions}, (P. Noordhoff, Ltd., Groningen, 1963),
\newblock translated by Peter Wynn.

\bibitem{KiKi_20}
T.~Kim and W.~Kim,
\newblock `Hausdorff measure of sets of Dirichlet non-improvable affine forms', Preprint, 2020, arXiv:2006.05727v2 [math.DS].


\bibitem {KlWa_19}
    D.~Kleinbock and N.~Wadleigh,
     \newblock ` A zero-one law for improvements to Dirichlet's theorem ',
  \newblock {\em  Proc. Amer. Math. Soc.}, 146(5) (2018), 1833--1844.
    
\bibitem {KlWa_2019}
    D.~Kleinbock and N.~Wadleigh,
     \newblock ` An inhomogeneous {D}irichlet theorem via shrinking targets',
  \newblock {\em Compos. Math.}, 155(7) (2019), 1402--1423.




\bibitem{Kr_16}
S.~Kristensen,
\newblock `Metric Diophantine approximation--from continued fractions to fractals', in  
{\em Diophantine Analysis: Course Notes from a Summer School} (eds. J. Steuding), (Birkh\"{a}user, Cham, 2016), Ch. 2, 61--127.

\bibitem{BiWaWuXu14}
B.~Li, B.-W. Wang, J.~Wu, and J.~Xu,
\newblock `The shrinking target problem in the dynamical system of
  continued fractions',
\newblock {\em Proc. Lond. Math. Soc. (3)}, 108(1) (2014), 159--186.



\bibitem{MaUr96}
R.~D. Mauldin and M.~Urba{\'n}ski,
\newblock `Dimensions and measures in infinite iterated function systems',
\newblock {\em Proc. Lond. Math. Soc. (3)}, 73(1) (1996), 105--154.

\bibitem{MaUr99}
R.~D. Mauldin and M.~Urba{\'n}ski,
\newblock `Conformal iterated function systems with applications to the geometry
  of continued fractions',
\newblock {\em Trans. Amer. Math. Soc.}, 351(12) (1999), 4995--5025.

\bibitem{MaUr03}
R.~D. Mauldin and M.~Urba{\'n}ski,
 {\em Graph Directed Markov Systems: Geometry and Dynamics of Limit
  Sets}, Cambridge Tracts in Mathematics, 148 
 (Cambridge University Press, Cambridge, UK, 2003).

\bibitem{Wa_82}
P.~Walters.
\newblock {\em An Introduction to Ergodic Theory}, Graduate
  Texts in Mathematics, 79
\newblock (Springer, New York, USA, 1982).

\bibitem{WaWu08}
B.-W. Wang and J.~Wu,
\newblock `Hausdorff dimension of certain sets arising in continued fraction
  expansions',
\newblock {\em Adv. Math.}, 218(5) (2008), 1319--1339.

\bibitem{WaWuXu16}
B.-W. Wang, J.~Wu, and J.~Xu,
 `A generalization of the {J}arn\'\i k--{B}esicovitch theorem by
  continued fractions',
 {\em Ergodic Theory Dynam. Systems}, 36(4) (2016), 1278--1306.

\end{thebibliography}

\end{document}